\makeindex \setcounter{tocdepth}{2}
\theoremstyle{plain}
\newtheorem{theorem}{Theorem}[section]
\newtheorem{proposition}[theorem]{Proposition}
\newtheorem{corollary}[theorem]{Corollary}
\newtheorem{lemma}[theorem]{Lemma}
\newtheorem{question}[theorem]{Question}
\theoremstyle{definition}
\newtheorem{definition}[theorem]{Definition}
\newtheorem{remark}[theorem]{Remark}
\newtheorem{example}[theorem]{Example}
\def\e{\mathbf{e}}
\def\bF{\mathbb{F}}
\def\bZ{\mathbb{Z}}
\def\bR{\mathbb{R}}
\def\D{\mathbf{D}}
\def\cA{\mathcal{A}}
\def\cD{\mathcal{D}}
\def\cE{\mathcal{E}}
\def\cF{\mathcal{F}}
\def\cG{\mathcal{G}}
\def\cL{\mathcal{L}}
\def\cN{\mathcal{N}}
\def\cO{\mathcal{O}}
\def\cQ{\mathcal{Q}}
\def\cS{\mathcal{S}}
\def\cX{\mathcal{X}}
\def\cU{\mathcal{U}}
\def\cT{\mathcal{T}}
\def\fR{\mathfrak{R}}
\def\fp{\mathfrak{p}}
\def\eval{\mathbf{eval}}
\begin{document}

\title{The classical umbral calculus, and the flow of a Drinfeld module}

\author{Nguyen Ngoc Dong Quan}

\date{May 25, 2015}

\address{Department of Mathematics \\
         The University of Texas at Austin \\
         Austin, TX 78712 \\
         USA}

\email{\href{mailto:dongquan.ngoc.nguyen@gmail.com}{\tt dongquan.ngoc.nguyen@gmail.com}}

\maketitle

\tableofcontents

\begin{abstract}

David Goss developed a very general Fourier transform in additive harmonic analysis in the function field setting. In order to introduce the Fourier transform for continuous characteristic $p$ valued functions on $\bZ_p$, Goss introduced and studied an analogue of flows in finite characteristic. In this paper, we use another approach to study flows in finite characteristic. We recast the notion of a flow in the language of the classical umbral calculus, which allows to generalize the formula for flows first proved by Goss to a more general setting. We study duality between flows using the classical umbral calculus, and show that the duality notion introduced by Goss seems a natural one. We also formulate a question of Goss about the exact relationship between two flows of a Drinfeld module in the language of the classical umbral calculus, and give a partial answer to it.

\end{abstract}

\section{Introduction}
\label{Section-Introduction}

The notion of a flow appears in many areas of mathematics and physics; for example, it is one of the fundamental notions in studying ordinary differential equations. Classically a flow on a set $\cX$ is a group action of the additive group of the set of real numbers $\bR$ on the set $\cX$. One of the classical theorems in analysis, the classical Taylor theorem, can be expressed in terms of a flow. More precisely, let $d/dt$ be the formal derivative acting on the polynomial ring $\bR[t]$. For each $x \in \bR$, let $e^{x(d/dt)} : \bR[t] \rightarrow \bR[t]$ be the linear operator defined by
\begin{align*}
e^{x(d/dt)} = \sum_{n = 0}^{\infty}x^n\dfrac{(d/dt)^n}{n!}.
\end{align*}
Then the map $\cF_{d/dt}$ defined by
\begin{align*}
\cF_{d/dt} : \bR[t] \times \bR &\rightarrow \bR[t] \\
(P(t), x) &\mapsto e^{x(d/dt)}P(t)
\end{align*}
is a flow on the polynomial ring $\bR[t]$. The classical Taylor theorem can be recast in the form
\begin{align*}
\cF_{d/dt}(P(t), x) = e^{x(d/dt)}P(t) = P(t + x)
\end{align*}
for any polynomial $P(t) \in \bR[t]$ and any real number $x \in \bR$.

David Goss \cite{Goss-JAlgebra-146-1992} developed a very general Fourier transform in additive harmonic analysis in the function field setting. In order to introduce the Fourier transform for continuous characteristic $p$ valued functions on $\bZ_p$, Goss introduced and studied an analogue of flows in finite characteristic. Let $\bF_q$ denote the finite field with $q$ elements, and let $F = \bF_q((1/t))$ be the completion of the function field $K = \bF_q(t)$. Let $|\cdot|_F$ denote the norm of $F$. Let $F_b[[T]]$ be the algebra of bounded power series (see Definition \ref{Definition-The-definition-of-a-bounded-power-series}). Let $\cF := \{\cF_k(x)\}_{k \ge 0}$ be a sequence of $F$-valued functions defined over $F$ that arises from additive functions (see Remark \ref{Remark-The-Goss-theorem-is-a-special-case} below). For any $x \in F$ with $\lim_{k \rightarrow \infty}\cF_k(x) = 0$, Goss \cite{Goss-JAlgebra-146-1992} defined a \textit{flow} $\cD_{\cF}(x)$ on $F_b[[T]]$ by
\begin{align}
\label{Equation-The-definition-of-flows-in-the-introduction}
\cD_{\cF}(x)P(T) = \sum_{k = 0}^{\infty}\cF_k(x)\dfrac{(d/dT)^k(P(T))}{k!}
\end{align}
for any bounded power series $P(T)$. For example, in the simple case in which $\cF_k(x) = x^k$ for all $k \ge 0$, and $\phi$ is a Drinfeld module with exponential $\e_{\phi}(x)$, Goss obtained the \textit{naive flow} $e^{\e_{\phi}(x)(d/dT)}$ for all $x \in F$ with $|x|_F < 1$, and then derived the flow equation for the Drinfeld module $\phi$ of the form
\begin{align*}
e^{\e_{\phi}(x)(d/dT)}P(T) = P(T + \e_{\phi}(x))
\end{align*}
for any bounded power series $P(T)$, which is a function field analogue of the classical Taylor theorem. There is another flow for a Drinfeld module $\phi$ which Goss called the ``twisted flow'' that is more involved and arises out of the theory of $\fp$-adic measures, where $\fp$ is a prime in $\bF_q[t]$. In order to derive the flow equation for the ``twisted flow", Goss used the differential Fourier transform on measures.

In this paper, we use another approach to study flows in finite characteristic. We recast the notion of a flow in the language of the classical umbral calculus, and then interpret the ``formal substitutions'' appearing in the formula for flows in Goss \cite{Goss-JAlgebra-146-1992} as the images of certain power series having an umbra as one of their variables under the evaluation map of an umbral calculus, which makes the formula for flows \textit{less formal} and more conceptual. Since we only use the classical umbral calculus and do not need the differential Fourier transform on measures as in the proof of \cite[Theorem 1]{Goss-JAlgebra-146-1992}, we can generalize the formula for flows to a more general setting. More precisely, the formula for flows works in a field $F$ of any characteristic provided that $F$ is complete under a non-Archimedean norm. Furthermore the sequences $\{\cF_k(x)\}_{k \ge 0}$ of functions defining flows are not necessary to arise out of additive functions as in Goss \cite{Goss-JAlgebra-146-1992}.

In order to avoid working with collections of functions, we introduce in this paper the notion of an \textit{umbral map} that can view each value of a collection of functions as the evaluation of powers of an indeterminate under a certain linear functional. More explicitly, take any complete field $F$ under a non-Archimedean norm, and let $\cU$ be a set whose elements are called \textit{umbrae}. An umbra in $\cU$ operates in the same way as an indeterminate, and one umbra, as we will see throughout the paper, represents the value of a collection of functions at some element in $F$. To make a transition between umbrae and $F$-valued functions defined over $F$, we assume that the set $\cU$ is equipped with a linear functional $\eval$ defined over $\cU$ and taking values in $F$ satisfying certain conditions.

A map $\cF$ defined over $F$ and taking values in $\cU$ is called an \textit{umbral map.} We can associate to any collection of functions $\{\cF_n(x)\}_{n \ge 0}$ an umbral map $\cF$ by taking an umbra $\cF(x)$ for each $x \in F$ and requiring that $\eval(\cF(x)^n) = \cF_n(x)$ for any integer $n \ge 0$. Hence an umbral map encodes information of its associated collection of functions and vice versa. The corresponding \textit{flow map} of an umbral map $\cF$ is defined by the same equation $(\ref{Equation-The-definition-of-flows-in-the-introduction})$ with $\{\cF_n(x)\}_{n \ge 0}$ being the associated functions of $\cF$. Note that in \cite{Goss-JAlgebra-146-1992}, Goss used the terminology ``flow'' for what we call ``flow map'' in this paper.

In \cite{Goss-JAlgebra-146-1992}, Goss introduced the notion of the dual of a flow, and proved that the image of a flow under the Fourier transform is the dual of the flow. In this paper, we adapt \cite[Theorem 6]{Goss-JAlgebra-146-1992} as the notion of duality between flow maps; more precisely, a flow map $\cD_{\cF}$ is said to be a dual of a flow map $\cD_{\widehat{\cF}}$ if there exists an \textit{additive isomorphism} $\phi : F_b[[T]] \rightarrow F_b[[T]]$ (see Definition \ref{Definition-Additive-linear-isomorphisms}) such that
\begin{align*}
\cD_{\widehat{\cF}}(x) = \phi \circ \cD_{\cF}(x) \circ \phi^{-1}
\end{align*}
for any $x \in F$ with both $\cD_{\cF}(x)$ and $\cD_{\widehat{\cF}}(x)$ being well-defined. One can see immediately from this notion that the dual relation is reflexive and symmetric, which is not clear from the duality notion introduced by Goss. By abuse of terminology, umbral maps whose corresponding flow maps are dual are also said to be \textit{dual}.

Our main result (see Theorem \ref{Theorem-A-flow-under-the-Fourier-transform}) concerning duality between flow maps is that the converse of \cite[Theorem 6]{Goss-JAlgebra-146-1992} is true, and it thus completely describes, in the language of the classical umbral calculus used here, all umbral maps whose corresponding flow maps are dual to each other. The converse of \cite[Theorem 6]{Goss-JAlgebra-146-1992} proved in Section \ref{Section-The-dual-of-a-flow-map} shows that the construction of dual flow maps first introduced by Goss seems a natural and correct one.

By duality, one expects that dual umbral maps should share similar properties, and in this paper we show one of the properties shared by dual umbral maps that seems important, that is, an umbral map \textit{satisfies the binomial theorem} (for a precise definition, see Definition \ref{Definition-Umbrae-satisfy-the-binomial-theorem}) if and only if one of its dual umbral maps satisfies the binomial theorem.

For dual umbral maps $\cF, \widehat{\cF}$, the sequences $\{\cF_n(x)\}_{n \ge 0}$, $\{\widehat{\cF}_n(x)\}_{n \ge 0}$ of functions are associated to $\cF$ and $\widehat{\cF}$, respectively, where $\cF_n(x) = \eval(\cF(x)^n)$ and $\widehat{\cF}_n(x) = \eval(\widehat{\cF}(x)^n)$ for all $x$ and any $n \ge 0$. We introduce another umbral map, denoted by $\cG$ that satisfies the equations $\eval(\cG(x)^n) = \widehat{\cF}_1(x)^n$ for all $x$ and all $n \ge 0$. In \cite{Goss-JAlgebra-146-1992}, Goss asked what the exact relationship between the flow maps $\cD_{\cG}$ and $\cD_{\widehat{\cF}}$ is. In this paper, we partially answer the question of Goss, and prove a necessary and sufficient criterion on the values of $\cF$ for which the flow maps $\cD_{\cG}$ and $\cD_{\widehat{\cF}}$ agree on a certain subset of the underlying field.

The paper is organized as follows. In Section \ref{Section-The-classical-umbral-calculus}, we recall the classical umbral calculus following Rota \cite{Rota-AMM-71-1964}, Roman and Rota \cite{Roman-Rota}, and Rota and Taylor \cite{Rota-Taylor-1993} \cite{Rota-Taylor-SIAM-25-1994}, and point out some slight modifications that we make in our description of the classical umbral calculus. In Section \ref{Section-The-flow-of-an-umbra}, we recast the notion of a flow in finite characteristic as introduced in \cite{Goss-JAlgebra-146-1992} in the language of the classical umbral calculus. Since our approach only uses the formalism of the classical umbral calculus, we can generalize the flow equations for Drinfeld modules in \cite{Goss-JAlgebra-146-1992} to a more general setting in which the domain of umbrae can be a field of any characteristic that is complete with respect to a non-Archimedean norm, and umbrae do not necessarily arise out of additive functions. In Section \ref{Section-The-dual-of-a-flow-map}, we recast the notion of duality between flow maps using the formalism of the classical umbral calculus. The main theorem in this section (see Theorem \ref{Theorem-A-flow-under-the-Fourier-transform}) is the converse of \cite[Theorem 6]{Goss-JAlgebra-146-1992} that completely describes the relationship between dual umbral maps. In Section \ref{Section-Examples-and-a-question-of-Goss}, we formulate the question of Goss in the language of the classical umbral calculus, and give a partial answer to it. We also discuss some examples of flow maps in this section. In the last section, we compare our umbral calculus with other umbral calculi in literature, for example, in Rota \cite{Rota-AMM-71-1964}, Roman and Rota \cite{Roman-Rota}, Rota and Taylor \cite{Rota-Taylor-1993} \cite{Rota-Taylor-SIAM-25-1994}, and Ueno \cite{Ueno-1988}. 

\section{The classical umbral calculus}
\label{Section-The-classical-umbral-calculus}

We give a description of the classical umbral calculus following Rota \cite{Rota-AMM-71-1964}, Roman and Rota \cite{Roman-Rota}, and Rota and Taylor \cite{Rota-Taylor-1993} \cite{Rota-Taylor-SIAM-25-1994} with slight modifications. In the traditional approach of the classical umbral calculus as presented in \cite{Rota-AMM-71-1964}, \cite{Rota-Taylor-1993}, \cite{Rota-Taylor-1993}, and \cite{Rota-Taylor-SIAM-25-1994}, the domain of umbrae is an integral domain of characteristic zero. In contrast to the traditional treatment, we work with a field of any characteristic as the domain of umbrae, and in Sections \ref{Section-The-dual-of-a-flow-map} and \ref{Section-Examples-and-a-question-of-Goss} we assume that the domain of umbrae is a field of characteristic $p > 0$.

We will mainly work with power series whose coefficients are power series having umbrae as their variables. Hence in order to extend the evaluation map of an umbral calculus to evaluate such objects, a necessary condition is that the domain of umbrae is a complete field with respect to a non-Archimedean norm. We now begin to describe the classical umbral calculus that will be used throughout the paper.

Let $F$ be a field such that $F$ is complete with respect to a non-Archimedean norm $|\cdot|_F$. The characteristic of $F$ is arbitrary unless otherwise stated. Let $F[[T]]$ denote the formal power series in the variable $T$ over $F$. An \textit{umbral calculus} on $F[[T]]$ is a pair $(\cU, \eval)$ satisfying the following conditions.
\begin{itemize}

\item [(1)] $\cU$ is a set whose elements are called \textit{umbrae};

\item [(2)] $\eval : F[[T]][\cU] \rightarrow F[[T]]$ is a linear functional defined on the polynomial ring $F[[T]][\cU]$ with values in $F[[T]]$ satisfying:
\begin{itemize}

\item [(i)] $\eval(1) = 1$, where $1$ is the identity element of $F$;

\item [(ii)] $\eval(\alpha^n) \in F$ for any umbra $\alpha \in \cU$ and any nonnegative integer $n$; and

\item [(iii)] $\eval(\alpha_1^{n_1}\alpha_2^{n_2}\cdots\alpha_m^{n_m}) = \eval(\alpha_1^{n_1})\eval(\alpha_2^{n_2})\cdots \eval(\alpha_m^{n_m})$, where $\{\alpha_1, \ldots, \alpha_m\}$ is an arbitrary collection of \textit{distinct} umbrae and $n_1, n_2, \ldots, n_m$ are arbitrary nonnegative integers.

\end{itemize}

\end{itemize}

Note that Rota and Taylor \cite{Rota-Taylor-SIAM-25-1994} also included the \textit{augmentation} in the definition of an umbral calculus, namely, an umbra $\epsilon$ such that $\eval(\epsilon^n) = \delta_{n, 0}$, where $\delta_{n, 0}$ is the Kronecker delta. We however do not include the augmentation in the above definition. One reason is that we will not need it here, or elsewhere in the paper. Another reason is that because of condition $(2)(iii)$ above, i.e., the independence between the values of powers of distinct umbrae, we always can \textit{augment} the set $\cU$ to include the augmentation if necessary. 

For the rest of this paper, we will always denote by $(\cU, \eval)$ an umbral calculus on $F[[T]]$, and assume that the set $\cU$ is sufficiently large to contain all umbrae in this paper. The action of the evaluation map $\eval$ on any umbra in this paper will be clear from the context.

Let $\cS$ be a finite subset of $\cU$ that consists of distinct umbrae $\gamma_1, \gamma_2, \ldots, \gamma_s$. We want to extend the linear functional $\eval$ to a certain subset of the algebra of power series $F[[T]][[\cS]]$. It turns out that $\eval$ can extend to a subalgebra of $F[[T]][[\cS]]$ whose elements are called \textit{admissible power series}.
\begin{definition}
\label{Definition-Admissible-power-series}

Let $P(\gamma_1, \gamma_2, \ldots, \gamma_s) = \sum_{n_1, n_2, \ldots, n_s \ge 0}P_{n_1, \ldots, n_s}\gamma_1^{n_1}\gamma_2^{n_2}\cdots \gamma_s^{n_s}$ be a power series in $F[[T]][[\cS]]$, where the $P_{n_1, \ldots, n_s}$ are elements in $F[[T]]$. Write $P(\gamma_1, \gamma_2, \ldots, \gamma_s)$ in the form
\begin{align*}
P(\gamma_1, \gamma_2, \ldots, \gamma_s) = \sum_{n = 0}^{\infty}U_n(\gamma_1, \ldots, \gamma_s) T^n,
\end{align*}
where the $U_n(\gamma_1, \ldots, \gamma_s)$ are elements in $F[[\cS]]$. We say that $P(\gamma_1, \gamma_2, \ldots, \gamma_s)$ is an \textit{admissible power series} if $\eval\left(U_n(\gamma_1, \cdots, \gamma_s)\right)$ is well-defined as an element of $F$ for all $n \ge 0$.

\end{definition}

\begin{remark}
\label{Remark-Clarification-of-the-notion-of-admissible-power-series}

We maintain the same notation as in Definition \ref{Definition-Admissible-power-series}. Take any integer $n \ge 0$, and write
\begin{align*}
U_n(\gamma_1, \ldots, \gamma_s) = \sum_{m_1, m_2, \ldots, m_s \ge 0}a_{m_1, m_2, \ldots, m_s}\gamma_1^{m_1}\gamma_2^{m_2}\cdots \gamma_s^{m_s},
\end{align*}
where the $a_{m_1, m_2, \ldots, m_s}$ are elements in $F$. Since $F$ is a complete field under a non-Archimedean norm, we see that $\eval\left(U_n(\gamma_1, \cdots, \gamma_s)\right)$ is well-defined as an element of $F$ if either of the following conditions is satisfied.
\begin{itemize}

\item [(i)] $U_n(\gamma_1, \ldots, \gamma_s)$ is a polynomial in the variables $\gamma_1, \ldots, \gamma_s$ over $F$, that is, all but finitely many coefficients $a_{m_1, m_2, \ldots, m_s}$ are zero. (Note that Gessel \cite{Gessel} used this condition as the notion of admissible power series.)

\item [(ii)] if $U_n(\gamma_1, \ldots, \gamma_s)$ is not a polynomial, that is, there are infinitely many nonzero coefficients $a_{m_1, m_2, \ldots, m_s}$, then
    \begin{align*}
    a_{m_1, m_2, \ldots, m_s}\eval(\gamma_1^{m_1})\eval(\gamma_2^{m_2})\cdots \eval(\gamma_s^{m_s}) \rightarrow 0
    \end{align*}
    when $m := m_1 + m_2 + \cdots + m_s \rightarrow \infty$.

\end{itemize}

\end{remark}

We denote by $F[[T]][[\cS]]_{\cA}$ the subset of $F[[T]][[\cS]]$ that consists of all admissible power series. The following result is immediate from Definition \ref{Definition-Admissible-power-series} and Remark \ref{Remark-Clarification-of-the-notion-of-admissible-power-series}.

\begin{proposition}
\label{Proposition-Ring-structure-of-F[[T]][[S]]-A}

$F[[T]][[\cS]]_{\cA}$ is a subalgebra of $F[[T]][[\cS]]$.

\end{proposition}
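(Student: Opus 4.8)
The plan is to verify directly that $F[[T]][[\cS]]_{\cA}$ is closed under the algebra operations inherited from $F[[T]][[\cS]]$, namely $F$-scalar multiplication, addition, and multiplication, and that it contains $1$. The key observation is that admissibility is a statement about each of the ``$T$-coefficients'' $U_n(\gamma_1,\ldots,\gamma_s) \in F[[\cS]]$ separately: a power series is admissible precisely when $\eval(U_n)$ is well-defined in $F$ for every $n$. So it suffices to show that the set of elements of $F[[\cS]]$ on which $\eval$ is well-defined is an $F$-subalgebra of $F[[\cS]]$, and then transport this statement coefficient-by-coefficient through the identification $F[[T]][[\cS]] = (F[[\cS]])[[T]]$, using that multiplication of two power series in $T$ produces coefficients that are \emph{finite} sums of products of the original coefficients.

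Concretely, I would first note that the constant power series $1$ has $U_0 = 1$ and $U_n = 0$ for $n \geq 1$, and $\eval(1) = 1$ by condition $(2)(i)$, so $1 \in F[[T]][[\cS]]_{\cA}$. For closure under scalar multiplication and addition: if $P = \sum U_n T^n$ and $Q = \sum V_n T^n$ are admissible and $c \in F$, then $cP + Q = \sum (cU_n + V_n)T^n$, and for each $n$ one checks using Remark \ref{Remark-Clarification-of-the-notion-of-admissible-power-series} that $cU_n + V_n$ has a well-defined evaluation: writing $U_n$ and $V_n$ in terms of their coefficients $a_{m_1,\ldots,m_s}$ and $b_{m_1,\ldots,m_s}$, the coefficient of $\gamma_1^{m_1}\cdots\gamma_s^{m_s}$ in $cU_n + V_n$ is $ca_{m_1,\ldots,m_s} + b_{m_1,\ldots,m_s}$, and since the norm is non-Archimedean, $(ca_{m_1,\ldots,m_s} + b_{m_1,\ldots,m_s})\eval(\gamma_1^{m_1})\cdots\eval(\gamma_s^{m_s}) \to 0$ as $m \to \infty$ because each of the two summands does (the factor $c$ being a fixed element of $F$ of bounded norm). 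Hence $\eval(cU_n + V_n)$ is well-defined, so $cP + Q$ is admissible. In fact $\eval$ restricted to admissible elements of $F[[\cS]]$ is $F$-linear.

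For closure under multiplication, write $PQ = \sum_n W_n T^n$ with $W_n = \sum_{i+j=n} U_i V_j$, a finite sum. So it is enough to show that if $U, V \in F[[\cS]]$ both have well-defined evaluations, then so does $UV$; closure under finite sums then handles $W_n$. Write $U = \sum_{\mathbf{m}} a_{\mathbf{m}}\boldsymbol{\gamma}^{\mathbf{m}}$ and $V = \sum_{\mathbf{m}} b_{\mathbf{m}}\boldsymbol{\gamma}^{\mathbf{m}}$ in multi-index notation; the coefficient of $\boldsymbol{\gamma}^{\mathbf{k}}$ in $UV$ is $c_{\mathbf{k}} = \sum_{\mathbf{m}+\mathbf{m}'=\mathbf{k}} a_{\mathbf{m}} b_{\mathbf{m}'}$, a finite sum. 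Then $c_{\mathbf{k}}\eval(\boldsymbol{\gamma}^{\mathbf{k}}) = \sum_{\mathbf{m}+\mathbf{m}'=\mathbf{k}} a_{\mathbf{m}}\eval(\boldsymbol{\gamma}^{\mathbf{m}})\, b_{\mathbf{m}'}\eval(\boldsymbol{\gamma}^{\mathbf{m}'})$, using the multiplicativity $(2)(iii)$ to split $\eval(\boldsymbol{\gamma}^{\mathbf{k}}) = \eval(\boldsymbol{\gamma}^{\mathbf{m}})\eval(\boldsymbol{\gamma}^{\mathbf{m}'})$. Now $|a_{\mathbf{m}}\eval(\boldsymbol{\gamma}^{\mathbf{m}})|$ and $|b_{\mathbf{m}'}\eval(\boldsymbol{\gamma}^{\mathbf{m}'})|$ are both bounded (they tend to $0$) and at least one of $|\mathbf{m}|, |\mathbf{m}'|$ is $\geq |\mathbf{k}|/2$, so by the ultrametric inequality $|c_{\mathbf{k}}\eval(\boldsymbol{\gamma}^{\mathbf{k}})|$ is bounded by $(\sup|a\eval|)(\sup|b\eval|)$ times a quantity going to $0$, hence $c_{\mathbf{k}}\eval(\boldsymbol{\gamma}^{\mathbf{k}}) \to 0$ as $|\mathbf{k}| \to \infty$. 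Therefore $\eval(UV)$ is well-defined, completing the proof.

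The main obstacle, such as it is, is purely bookkeeping: one must be careful that the double-indexed sums in the multiplication are legitimately finite at each fixed multi-index $\mathbf{k}$ (so that $(2)(iii)$ and $F$-linearity apply without any convergence question about the \emph{inner} sum), and then isolate the genuinely analytic point, which is the ultrametric estimate forcing $c_{\mathbf{k}}\eval(\boldsymbol{\gamma}^{\mathbf{k}}) \to 0$. I expect the write-up to be short; most of the content is already packaged in Definition \ref{Definition-Admissible-power-series} and Remark \ref{Remark-Clarification-of-the-notion-of-admissible-power-series}, and the non-Archimedean hypothesis on $|\cdot|_F$ is exactly what makes the estimate go through with no further effort.
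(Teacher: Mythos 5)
Your handling of $1$, addition, and $F$-scalar multiplication is correct, but the multiplication step rests on a false identity, and that is precisely where the substance of the statement lies. You factor $\eval(\gamma_1^{k_1}\cdots\gamma_s^{k_s})$ as $\eval(\gamma_1^{m_1}\cdots\gamma_s^{m_s})\,\eval(\gamma_1^{m_1'}\cdots\gamma_s^{m_s'})$ for $m_i+m_i'=k_i$. Condition $(2)(iii)$ only licenses factoring across \emph{distinct} umbrae; for a single umbra $\alpha$ the value $\eval(\alpha^{m+m'})=u_{m+m'}$ has in general nothing to do with $u_m u_{m'}$ --- indeed the entire point of an umbra is that the sequence it umbrally represents need not be geometric. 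Once that identity is removed, your Cauchy-product estimate gives no control: the coefficient $c_{k_1,\ldots,k_s}$ of $\gamma_1^{k_1}\cdots\gamma_s^{k_s}$ in $UV$ is a finite sum of products $a\cdot b$ of \emph{coefficients}, whereas the quantities whose decay you assumed are the coefficients already multiplied by the (unbounded, uncontrolled) moments $\eval(\gamma_1^{m_1})\cdots\eval(\gamma_s^{m_s})$, and these cannot be recombined with $\eval(\gamma_1^{k_1}\cdots\gamma_s^{k_s})$.

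The gap is not one of bookkeeping. Take $\cS=\{\gamma\}$ over $F=\bF_q((1/t))$ with $\eval(\gamma^0)=1$, $\eval(\gamma^k)=0$ for $k$ odd and $\eval(\gamma^k)=t^{k}$ for $k\ge 2$ even, and let $U=\sum_{m\ \mathrm{odd}}t^{-m}\gamma^m$. Every term $a_m\eval(\gamma^m)$ vanishes, so $U$ is admissible; but for even $k$ the coefficient of $\gamma^k$ in $U^2$ is $(k/2)\,t^{-k}$, so the terms of $U^2$ are $(k/2)\cdot 1\in\bF_p$, which do not tend to $0$, and $U^2$ satisfies neither condition of Remark \ref{Remark-Clarification-of-the-notion-of-admissible-power-series}. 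So closure under multiplication cannot be established by any argument that uses only the termwise decay of $U$ and $V$ separately; one needs extra hypotheses (e.g.\ bounded moment sequences together with decay of the coefficients themselves, which is what actually holds for the series $P(T+\alpha)$ arising later in the paper) or a different reading of ``well-defined.'' For comparison, the paper offers no proof at all --- it declares the proposition immediate from Definition \ref{Definition-Admissible-power-series} and Remark \ref{Remark-Clarification-of-the-notion-of-admissible-power-series} --- and your attempt to actually write one out is valuable exactly because it isolates multiplicative closure as the one assertion that is not immediate and, as stated, appears to fail.
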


It is obvious that the polynomial ring $F[[T]][\cS]$ is a subalgebra of $F[[T]][[\cS]]_{\cA}$. If $P(\gamma_1, \gamma_2, \ldots, \gamma_s)$ is an admissible power series in $F[[T]][[\cS]]_{\cA}$, we define
\begin{align}
\label{Definition-The-evaluation-of-admissible-power-series}
\eval(P) = \sum_{n = 0}^{\infty}\eval\left(U_n(\gamma_1, \ldots, \gamma_s)\right) T^n,
\end{align}
where the $U_n(\gamma_1, \ldots, \gamma_s)$ are the unique elements in $F[[\cS]]$ such that
\begin{align*}
P(\gamma_1, \gamma_2, \ldots, \gamma_s) = \sum_{n = 0}^{\infty}U_n(\gamma_1, \ldots, \gamma_s) T^n.
\end{align*}
Since $P$ is an admissible power series, the right-hand side of $(\ref{Definition-The-evaluation-of-admissible-power-series})$ is well-defined as an element of $F[[T]]$. Hence $\eval$ can be extended to a linear functional, also denoted by $\eval$, defined on the algebra of admissible power series $F[[T]][[\cS]]_{\cA}$ and taking values in $F[[T]]$.

Let $\{u_k\}_{k \ge 0}$ be a sequence of elements in $F$. An umbra $\alpha$ is said to \textit{umbrally represent} $\{u_k\}_{k \ge 0}$ if $\eval(\alpha^k) = u_k$ for all $k \ge 0$. Since $\eval(\alpha^0) = \eval(1) = 1$, it is necessary that $u_0 = 1$ if the sequence $\{u_k\}_{k \ge 0}$ is umbrally represented by $\alpha$.

For an element $P \in F[[T]][[\cS]]$, we can write $P$ as an (infinite) sum of distinct monomials with nonzero coefficients in $F[[T]]$. The \textit{support} of $P$ consists of all umbrae that occur in some monomial with positive power in the sum representation of $P$.

Let $P \in F[[T]][[\cS]]$. If $\{\gamma_{i_1}, \gamma_{i_2}, \ldots, \gamma_{i_k}\} \subset \cS$ is the support of $P$, we can write $P$ as $P(\gamma_{i_1}, \ldots, \gamma_{i_k})$ to signify that $P$ only depends on the umbrae $\gamma_{i_1}, \ldots, \gamma_{i_k}$.

\begin{definition}
\label{Definition-The-definition-of-a-bounded-power-series}

A formal power series $P(T) = \sum_{m = 0}^{\infty}a_m T^m \in F[[T]]$ is said to be \textit{bounded} if there exists a positive constant $c > 0$ such that $|a_m|_F < c$ for all $m \ge 0$.

\end{definition}

It is not difficult to see that the set of all bounded formal power series in $F[[T]]$ forms a subalgebra of $F[[T]]$. We denote by $F_b[[T]]$ the algebra of all bounded formal power series.

Throughout the paper, we denote by $D = d/dT$ the derivation acting on $F[[T]]$, that is, $DT^k = kT^{k - 1}$ for all $k \ge 1$, and extend $D$ by linearity. For all $n \ge 1$, we define $D^n = D(D^{n - 1})$ inductively, where $D^1 = D$ and $D^0$ is the identity map.

Recall that the \textit{$k$-th Hasse derivative} $\D^{(k)}$ is defined by $\dfrac{D^k}{k!}$, that is, for any $n \ge 1$ and any $k \ge 0$,
\begin{align*}
\D^{(k)}T^n =
\begin{cases}
\binom{n}{k}T^{n - k} \; \; &\text{if $n \ge k$,} \\
0 \; \; &\text{otherwise.}
\end{cases}
\end{align*}

\begin{lemma}
\label{Lemma-Derivatives-of-a-bounded-power-series-is-bounded}

Let $P(T)$ be a bounded power series in $F_b[[T]]$. Then for any $k \ge 0$, the formal power series $\D^{(k)}P(T)$ is bounded.

\end{lemma}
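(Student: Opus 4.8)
The plan is to work directly from the definitions of boundedness and the Hasse derivative. Write $P(T) = \sum_{m=0}^{\infty} a_m T^m$ with $|a_m|_F < c$ for all $m$, for some constant $c > 0$. By the formula for the Hasse derivative recalled just above the statement, and by linearity,
\begin{align*}
\D^{(k)}P(T) = \sum_{m \ge k} \binom{m}{k} a_m T^{m-k} = \sum_{n \ge 0} \binom{n+k}{k} a_{n+k} T^{n}.
\end{align*}
So the $n$-th coefficient of $\D^{(k)}P(T)$ is $b_n := \binom{n+k}{k} a_{n+k}$, and it suffices to show the sequence $\{|b_n|_F\}_{n \ge 0}$ is bounded.

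First I would observe that $\binom{n+k}{k}$ is an integer, hence its image in $F$ is an integer multiple of $1 \in F$; since $|\cdot|_F$ is non-Archimedean, the ultrametric inequality gives $|\binom{n+k}{k} \cdot 1|_F \le |1|_F = 1$ (an integer is a sum of $\pm 1$'s, and the non-Archimedean inequality bounds the norm of any such sum by $\max |{\pm}1|_F = 1$). Therefore
\begin{align*}
|b_n|_F = \left|\binom{n+k}{k}\right|_F \, |a_{n+k}|_F \le 1 \cdot |a_{n+k}|_F < c
\end{align*}
for every $n \ge 0$, which is exactly the statement that $\D^{(k)}P(T)$ is bounded with the same constant $c$.

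The only point requiring any care — and the one place where the non-Archimedean hypothesis is essential rather than decorative — is the claim $|\binom{n+k}{k}|_F \le 1$. I would state this cleanly as: for any $N \in \bZ$ and any non-Archimedean norm $|\cdot|_F$ on a field $F$, one has $|N \cdot 1|_F \le 1$; this follows by induction on $|N|$ using $|m \cdot 1|_F \le \max(|(m-1)\cdot 1|_F, |1|_F)$, together with $|{-}x|_F = |x|_F$. Note that in positive characteristic $\binom{n+k}{k}$ may even be $0$ in $F$, which only helps. No genuine obstacle arises; the argument is a short direct estimate, and the proof is essentially complete once the integer-coefficient bound is recorded.
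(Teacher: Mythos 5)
Your proof is correct and follows essentially the same route as the paper: both reduce to the observation that the integer $\binom{m}{k}$ cannot increase the norm in a non-Archimedean field, the paper by writing $\binom{m}{k}a_m$ as a sum of $\binom{m}{k}$ copies of $a_m$ and applying the ultrametric inequality, you by first noting $\bigl|\binom{m}{k}\cdot 1\bigr|_F \le 1$ and then using multiplicativity. The same constant $c$ works in both versions, so nothing is missing.
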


\begin{proof}

Take any integer $k \ge 0$, and write $P(T) = \sum_{m = 0}^{\infty}a_mT^m$. We see that
\begin{align*}
\D^{(k)}P(T) = \sum_{m \ge k}a_m\binom{m}{k}T^{m - k}.
\end{align*}

By assumption, there exists a positive constant $c > 0$ such that $|a_m|_F < c$ for all $m \ge 0$. Since $|\cdot|_F$ is non-Archimedean, we deduce that
\begin{align*}
\left|\binom{m}{k}a_{m}\right|_F = |\underbrace{a_{m} + a_{m} + \dotsb + a_{m}}_{\binom{m}{k} \; \text{copies of $a_m$}}|_F \le \max(\underbrace{|a_{m}|_F, \dotsc, |a_{m}|_F}_{\binom{m}{k} \; \text{copies of $a_m$}}) = |a_{m}|_F < c
\end{align*}
for all $m \ge k$. Thus $\D^{(k)}P(T)$ is a bounded power series for all $k \ge 0$.

\end{proof}

\begin{proposition}
\label{Proposition-A-condition-for-a-bounded-power-series-to-be-admissible}

Let $P(T)$ be a bounded power series in $F_b[[T]]$. Let $\alpha$ be an umbra, and let $\{u_k\}_{k \ge 0} \subset F$ be the sequence umbrally represented by $\alpha$. Assume that $u_k \rightarrow 0$ when $k \rightarrow \infty$. Then the umbral power series $P(T + \alpha) \in F[[T]][[\alpha]]$ is an admissible power series.

\end{proposition}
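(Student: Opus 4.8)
The plan is to compute the expansion of $P(T + \alpha)$ explicitly as a power series in $T$ whose coefficients lie in $F[[\alpha]]$, and then to verify the admissibility criterion of Remark~\ref{Remark-Clarification-of-the-notion-of-admissible-power-series}(ii) coefficient by coefficient using the boundedness of $P$ together with the hypothesis $u_k \rightarrow 0$.

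First I would write $P(T) = \sum_{m \ge 0} a_m T^m$ with $|a_m|_F < c$ for all $m$, and formally substitute $T \mapsto T + \alpha$:
\begin{align*}
P(T + \alpha) = \sum_{m \ge 0} a_m (T + \alpha)^m = \sum_{m \ge 0} a_m \sum_{k = 0}^{m} \binom{m}{k} \alpha^k T^{m - k}.
\end{align*}
Collecting the coefficient of $T^n$, i.e.\ setting $m = n + k$, this exhibits $P(T + \alpha) = \sum_{n \ge 0} U_n(\alpha) T^n$ with
\begin{align*}
U_n(\alpha) = \sum_{k \ge 0} a_{n + k} \binom{n + k}{k} \alpha^k \in F[[\alpha]].
\end{align*}
Here I should remark that this rearrangement is legitimate in the formal power series ring $F[[T]][[\alpha]]$: for each fixed pair $(n, k)$, only the single index $m = n + k$ contributes to the coefficient of $T^n \alpha^k$, so $P(T + \alpha)$ is a well-defined element of $F[[T]][[\alpha]]$ and each $U_n(\alpha)$ is a genuine element of $F[[\alpha]]$.

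Next, since $\alpha$ umbrally represents $\{u_k\}_{k \ge 0}$ we have $\eval(\alpha^k) = u_k$, and $U_n(\alpha)$ is in general not a polynomial, so by Remark~\ref{Remark-Clarification-of-the-notion-of-admissible-power-series}(ii) it suffices to show, for each fixed $n \ge 0$, that
\begin{align*}
a_{n + k} \binom{n + k}{k} \eval(\alpha^k) = a_{n + k} \binom{n + k}{k} u_k \rightarrow 0 \quad \text{as } k \rightarrow \infty.
\end{align*}
Since $\binom{n + k}{k}$ is a positive integer and $|\cdot|_F$ is non-Archimedean, the same computation as in the proof of Lemma~\ref{Lemma-Derivatives-of-a-bounded-power-series-is-bounded} gives $\bigl|\binom{n + k}{k}\bigr|_F \le 1$; combining this with $|a_{n + k}|_F < c$ yields
\begin{align*}
\left| a_{n + k} \binom{n + k}{k} u_k \right|_F \le |a_{n + k}|_F \left|\binom{n + k}{k}\right|_F |u_k|_F \le c\, |u_k|_F,
\end{align*}
which tends to $0$ because $u_k \rightarrow 0$. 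Hence $\eval(U_n(\alpha))$ is well-defined as an element of $F$ for every $n \ge 0$, and therefore $P(T + \alpha)$ is admissible.

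I do not expect any serious obstacle here. The only points that need a little care are the reindexing $m = n + k$ used to identify $U_n(\alpha)$ correctly (and the accompanying check that the substitution lands in $F[[T]][[\alpha]]$), and the non-Archimedean estimate $\bigl|\binom{n+k}{k}\bigr|_F \le 1$ — which is precisely the trick already used for Lemma~\ref{Lemma-Derivatives-of-a-bounded-power-series-is-bounded}, and is what makes boundedness of $P$ together with $u_k \rightarrow 0$ sufficient to conclude.
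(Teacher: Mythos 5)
Your proof is correct and follows essentially the same route as the paper's: your coefficient $U_n(\alpha)=\sum_{k\ge 0}a_{n+k}\binom{n+k}{k}\alpha^k$ is exactly $\D^{(n)}P(\alpha)$, and your direct non-Archimedean estimate $\bigl|\binom{n+k}{k}\bigr|_F\le 1$ is precisely the content of Lemma~\ref{Lemma-Derivatives-of-a-bounded-power-series-is-bounded}, which the paper invokes instead of redoing. No gaps.
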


\begin{proof}

We can write $P(T + \alpha)$ in the form
\begin{align*}
P(T + \alpha) = \sum_{k = 0}^{\infty}\D^{(k)}(P(\alpha))T^k.
\end{align*}
Write $P(T) = \sum_{m = 0}^{\infty}a_mT^m$. We see that
\begin{align*}
\D^{(k)}P(T) = \sum_{m \ge k}a_m\binom{m}{k}T^{m - k}
\end{align*}
for all $k \ge 0$, and hence $\D^{(k)}P(\alpha) = \sum_{m \ge k}a_m\binom{m}{k}\alpha^{m - k}$.

Take any integer $k \ge 0$. We know from Lemma \ref{Lemma-Derivatives-of-a-bounded-power-series-is-bounded} that $\D^{(k)}P(T)$ is bounded, and hence there exists a constant $c > 0$ such that
\begin{align*}
\left|a_m\binom{m}{k}\right|_F < c
\end{align*}
for all $m \ge k$. Since $u_{m - k} \rightarrow 0$ when $m \rightarrow \infty$, it follows that $a_m\binom{m}{k}u_{m - k} \rightarrow 0$ when $m \rightarrow \infty$. Since $F$ is a complete field with respect to the non-Archimedean norm $|\cdot|_F$, it follows that the sum defined by
\begin{align*}
\sum_{m \ge k}a_m\binom{m}{k}\eval(\alpha^{m - k}) = \sum_{m \ge k}a_m\binom{m}{k}u_{m - k}
\end{align*}
converges to an element in $F$. It thus follows from Definition \ref{Definition-Admissible-power-series} that $P(T + \alpha)$ is an admissible power series in $F[[T]][[\alpha]]_{\cA}$.

\end{proof}

\subsection{The exponential operator}
\label{Subsection-The-exponential-operator}

For each umbra $\alpha$, we define a linear operator $\cE^{\alpha \D} : F[[T]] \rightarrow F[[T]][[\alpha]]$ by the equation
\begin{align*}
\cE^{\alpha \D} = \sum_{k = 0}^{\infty}\alpha^k \dfrac{(d/dT)^k}{k!} = \sum_{k = 0}^{\infty}\alpha^k \D^{(k)}.
\end{align*}
More explicitly, we have
\begin{align*}
\cE^{\alpha \D}P(T) = \sum_{k = 0}^{\infty}\alpha^k \D^{(k)}P(T)
\end{align*}
for any formal power series $P(T) \in F[[T]]$. We call $\cE^{\alpha \D}$ the \textit{exponential operator with multiplicity $\alpha$}.

The following result is an analogue of the Taylor theorem for formal power series.
\begin{theorem}
\label{Theorem-The-theorem-of-Taylor-for-formal-power-series}

Let $P(T) \in F[[T]]$, and let $\alpha$ be an umbra. Then
\begin{align*}
\cE^{\alpha \D}P(T) = P(T + \alpha).
\end{align*}

\end{theorem}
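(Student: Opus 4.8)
The plan is to reduce the claimed identity to a coefficient-by-coefficient verification in $F[[T]]$, using the fact that both sides live in $F[[T]][[\alpha]]$ and that $\alpha$ behaves as a formal indeterminate. First I would take $P(T) = \sum_{m=0}^{\infty} a_m T^m$ and expand the right-hand side $P(T+\alpha)$ by the binomial theorem for each monomial: $(T+\alpha)^m = \sum_{k=0}^{m} \binom{m}{k} T^{m-k}\alpha^k$. Collecting terms by powers of $\alpha$, this gives
\begin{align*}
P(T+\alpha) = \sum_{k=0}^{\infty} \alpha^k \left( \sum_{m \ge k} a_m \binom{m}{k} T^{m-k} \right).
\end{align*}
On the other hand, by definition $\cE^{\alpha\D}P(T) = \sum_{k=0}^{\infty} \alpha^k \D^{(k)}P(T)$, and the computation already carried out in the proof of Proposition \ref{Proposition-A-condition-for-a-bounded-power-series-to-be-admissible} shows $\D^{(k)}P(T) = \sum_{m \ge k} a_m \binom{m}{k} T^{m-k}$. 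Hence the coefficients of $\alpha^k$ on both sides coincide, and the identity follows.

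The one genuinely substantive point is justifying that this manipulation is legitimate as an identity in $F[[T]][[\alpha]]$ — i.e., that rearranging the double sum $\sum_m a_m \sum_k \binom{m}{k} T^{m-k}\alpha^k$ into $\sum_k \alpha^k \sum_m (\cdots)$ is valid. For a fixed total degree, only finitely many monomials $T^i \alpha^j$ contribute (since $(T+\alpha)^m$ has degree exactly $m$ in the $T,\alpha$ variables jointly), so each coefficient in $F$ of a fixed monomial $T^i\alpha^j$ is a finite sum; this makes the reindexing a formal identity valid coefficientwise with no convergence hypothesis needed. I would state this explicitly, noting that $P(T+\alpha)$ is interpreted via the substitution $T \mapsto T+\alpha$ applied termwise, which is well-defined in $F[[T]][[\alpha]]$ precisely because this finiteness holds.

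I expect the main (minor) obstacle to be bookkeeping clarity rather than any real difficulty: one must be careful that "$P(T+\alpha)$" is being defined in the first place as the image of $P(T)$ under the continuous $F$-algebra homomorphism $F[[T]] \to F[[T]][[\alpha]]$ sending $T \mapsto T+\alpha$, and then the theorem is the assertion that this homomorphism coincides with the operator $\cE^{\alpha\D}$. Since the two agree on the generating set $\{T^m\}$ by the binomial expansion, and both are $F$-linear and continuous for the $T$-adic topology, they agree everywhere. I would organize the proof as: (1) expand $(T+\alpha)^m$; (2) reindex by powers of $\alpha$, justifying termwise finiteness; (3) recognize the inner sum as $\D^{(k)}(T^m)$ and sum over $m$; (4) conclude by linearity and continuity. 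No step requires more than the definitions and the elementary binomial identity.
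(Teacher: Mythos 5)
Your proposal is correct and follows essentially the same route as the paper, whose entire proof is the observation that the coefficient of $\alpha^k$ in $P(T+\alpha)$ is $\D^{(k)}P(T)$ — exactly what you establish via the binomial expansion and reindexing. Your additional care about why the substitution $T \mapsto T+\alpha$ is well-defined in $F[[T]][[\alpha]]$ (each monomial $T^i\alpha^j$ receives a contribution from only the single term $m = i+j$) is a reasonable elaboration of what the paper leaves implicit.
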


\begin{proof}

By definition, we know that the coefficient of $\alpha^k$ in $P(T + \alpha)$ is $\D^{(k)}P(T)$, and hence Theorem \ref{Theorem-The-theorem-of-Taylor-for-formal-power-series} follows immediately.

\end{proof}

\begin{corollary}
\label{Corollary-The-image-of-the-ring-of-bounded-power-series-under-the-exponential-operator}

Let $\alpha$ be an umbra, and let $\{u_k\}_{k \ge 0}$ be the sequence umbrally represented by $\alpha$. Assume that $u_k \rightarrow 0$ when $k \rightarrow \infty$. Then the image of $F_b[[T]]$ under the exponential operator $\cE^{\alpha \D}$ is a subset of $F[[T]][[\alpha]]_{\cA}$: that is, $\cE^{\alpha \D}P(T)$ is an admissible power series for any bounded power series $P(T) \in F_b[[T]]$.

\end{corollary}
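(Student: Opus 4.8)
The plan is to reduce Corollary~\ref{Corollary-The-image-of-the-ring-of-bounded-power-series-under-the-exponential-operator} to the two results already established, namely Theorem~\ref{Theorem-The-theorem-of-Taylor-for-formal-power-series} and Proposition~\ref{Proposition-A-condition-for-a-bounded-power-series-to-be-admissible}. First I would fix a bounded power series $P(T) \in F_b[[T]]$ and apply Theorem~\ref{Theorem-The-theorem-of-Taylor-for-formal-power-series} to rewrite $\cE^{\alpha \D}P(T) = P(T + \alpha)$. This identity holds in $F[[T]][[\alpha]]$ with no hypothesis on $\alpha$, so it is a purely formal step. The point of the corollary is then exactly the admissibility of the right-hand side.

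Next I would invoke Proposition~\ref{Proposition-A-condition-for-a-bounded-power-series-to-be-admissible}. Its hypotheses match ours precisely: $P(T)$ is bounded, $\alpha$ is an umbra, $\{u_k\}_{k \ge 0}$ is the sequence umbrally represented by $\alpha$, and $u_k \rightarrow 0$ as $k \rightarrow \infty$. The conclusion of that proposition is that $P(T + \alpha) \in F[[T]][[\alpha]]_{\cA}$. Combining this with the Taylor identity gives $\cE^{\alpha \D}P(T) = P(T + \alpha) \in F[[T]][[\alpha]]_{\cA}$, which is exactly the assertion that $\cE^{\alpha \D}P(T)$ is admissible. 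Since $P(T)$ was an arbitrary element of $F_b[[T]]$, this shows $\cE^{\alpha \D}(F_b[[T]]) \subseteq F[[T]][[\alpha]]_{\cA}$, as claimed.

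In short, there is no real obstacle here: the corollary is a direct consequence of stringing together the two preceding results, and the only thing to check is that the hypotheses on $\alpha$ and $\{u_k\}_{k \ge 0}$ in the corollary are literally those required by Proposition~\ref{Proposition-A-condition-for-a-bounded-power-series-to-be-admissible}, which they are. If I wanted to make the write-up fully self-contained I could instead unwind the definition of $\cE^{\alpha \D}$ directly: write $P(T) = \sum_{m \ge 0} a_m T^m$ with $|a_m|_F$ bounded, expand $\cE^{\alpha \D}P(T) = \sum_{k \ge 0} \alpha^k \D^{(k)}P(T)$, collect the coefficient of $T^n$ to get $\sum_{k \ge 0} \binom{n+k}{k} a_{n+k} \alpha^k$, and observe that $\binom{n+k}{k} a_{n+k} u_k \rightarrow 0$ as $k \rightarrow \infty$ because $|\binom{n+k}{k} a_{n+k}|_F \le |a_{n+k}|_F$ is bounded (as in the proof of Lemma~\ref{Lemma-Derivatives-of-a-bounded-power-series-is-bounded}) while $u_k \rightarrow 0$; completeness of $F$ under the non-Archimedean norm then gives convergence, so the coefficient of $T^n$ is a well-defined element of $F$ for every $n$, which is the admissibility condition of Definition~\ref{Definition-Admissible-power-series}. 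But the first route, citing Theorem~\ref{Theorem-The-theorem-of-Taylor-for-formal-power-series} and Proposition~\ref{Proposition-A-condition-for-a-bounded-power-series-to-be-admissible}, is cleaner and is the one I would use.
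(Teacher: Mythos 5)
Your proof is correct and is essentially identical to the paper's: both apply Theorem \ref{Theorem-The-theorem-of-Taylor-for-formal-power-series} to rewrite $\cE^{\alpha\D}P(T)$ as $P(T+\alpha)$ and then invoke Proposition \ref{Proposition-A-condition-for-a-bounded-power-series-to-be-admissible} to conclude admissibility. The alternative direct computation you sketch is also sound, but the cited route is exactly the one the paper takes.
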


\begin{proof}

Take any bounded power series $P(T) \in F_b[[T]]$. By Theorem \ref{Theorem-The-theorem-of-Taylor-for-formal-power-series}, we know that $\cE^{\alpha\D}P(T) = P(T + \alpha)$. Hence it follows from Proposition \ref{Proposition-A-condition-for-a-bounded-power-series-to-be-admissible} that $\cE^{\alpha\D}P(T) = P(T + \alpha)$ is an admissible power series.

\end{proof}

\section{The flow of an umbra}
\label{Section-The-flow-of-an-umbra}

Let $\cF : F \rightarrow \cU$ be a map defined over $F$ and taking values in $\cU$; in other words, $\cF(x)$ is an umbra for all $x \in F$. We call $\cF$ the \textit{umbral map}. For each $x \in F$, we will always denote by $\{\cF_k(x)\}_{k \ge 0}$ the sequence umbrally represented by the umbra $\cF(x)$.

Define
\begin{align*}
\cA_{\cF} := \{x \in F \; | \; \text{$\cF_k(x) \rightarrow 0$ when $k \rightarrow \infty$}\}.
\end{align*}
We call $\cA_{\cF}$ the set of \textit{$\cF$-admissible elements}.

Fix an element $x \in \cA_{\cF}$. By Corollary \ref{Corollary-The-image-of-the-ring-of-bounded-power-series-under-the-exponential-operator}, we see that $\cE^{\cF(x)\cD}P(T)$ is an admissible power series in $F[[T]][[\cF(x)]]_{\cA}$ for any bounded power series $P(T) \in F_b[[T]]$. Thus $\eval(\cE^{\cF(x)\D}P(T))$ is well-defined as an element in $F[[T]]$ for any bounded power series $P(T) \in F_b[[T]]$.

By definition, we see that
\begin{align*}
\eval\left(\cE^{\cF(x)\D}P(T)\right) = \eval\left(\sum_{k = 0}^{\infty}\cF(x)^k \D^{(k)}P(T)\right) = \sum_{k = 0}^{\infty}\eval\left(\cF(x)^k\right)\D^{(k)}P(T) = \sum_{k = 0}^{\infty}\cF_k(x)\D^{(k)}P(T)
\end{align*}
for any bounded power series $P(T)$ in $F_b[[T]]$. Hence $\sum_{k = 0}^{\infty}\cF_k(x)\D^{(k)}P(T)$ is well-defined as an element in $F[[T]]$ for any bounded power series $P(T)$. This motivates the following definitions.

\begin{definition}
\label{Definition-The-flow-operator}

For each $x \in \cA_{\cF}$, we define $\cD_{\cF}(x) : F_b[[T]] \rightarrow F[[T]]$ by
\begin{align*}
\cD_{\cF}(x) = \sum_{k = 0}^{\infty}\cF_k(x) \D^{(k)}.
\end{align*}

\end{definition}

\begin{definition}
\label{Definition-The-flow-of-an-umbra}

\begin{itemize}

\item []

\item [(i)] For each $x \in \cA_{\cF}$, the action of $\cD_{\cF}(x)$ on $F_b[[T]]$ is called the \textit{flow of the umbral $\cF(x)$.}

\item [(ii)] The map $\cD_{\cF}$ that sends each element $x \in \cA_{\cF}$ to the linear operator $\cD_{\cF}(x) : F_b[[T]] \rightarrow F[[T]]$ is called the \textit{flow map} of $\cF$.

\end{itemize}

\end{definition}

In Corollary \ref{Corollary-A-flow-is-an-action-on-the-algebra-of-bounded-power-series} below, we prove that the image of a flow map $\cD_{\cF}$ is a subset of the ring of linear operators from $F_b[[T]]$ to itself. In order words, the image of a bounded power series under $\cD_{\cF}(x)$ is bounded for any $x \in \cA_{\cF}$.

We now prove the main result of this section that is a generalization of \cite[Theorem 1]{Goss-JAlgebra-146-1992}.

\begin{theorem}
\label{Theorem-A-generalization-of-the-Goss-theorem}

Let $x$ be an element in $\cA_{\cF}$. Then
\begin{align*}
\cD_{\cF}(x)P(T) = \eval(P(T + \cF(x))) = \sum_{k = 0}^{\infty}a_k\left(\sum_{m = 0}^k \binom{k}{m}\cF_m(x)T^{k - m}\right)
\end{align*}
for any bounded power series $P(T) = \sum_{k = 0}^{\infty} a_kT^k \in F_b[[T]]$.

\end{theorem}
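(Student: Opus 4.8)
The plan is to compute both sides of the claimed identity and match them coefficient by coefficient in powers of $T$. The starting point is Theorem \ref{Theorem-The-theorem-of-Taylor-for-formal-power-series}, which gives $\cE^{\cF(x)\D}P(T) = P(T + \cF(x))$, and Corollary \ref{Corollary-The-image-of-the-ring-of-bounded-power-series-under-the-exponential-operator}, which guarantees (since $x \in \cA_{\cF}$) that $P(T + \cF(x))$ is an admissible power series in $F[[T]][[\cF(x)]]_{\cA}$, so that $\eval$ may legitimately be applied to it. First I would recall from the discussion immediately preceding Definition \ref{Definition-The-flow-operator} that
\begin{align*}
\eval\left(\cE^{\cF(x)\D}P(T)\right) = \sum_{k = 0}^{\infty}\cF_k(x)\D^{(k)}P(T) = \cD_{\cF}(x)P(T),
\end{align*}
where the last equality is just Definition \ref{Definition-The-flow-operator}. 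Combining this with Taylor's theorem gives the first equality $\cD_{\cF}(x)P(T) = \eval(P(T + \cF(x)))$ for free.

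For the second equality I would write $P(T) = \sum_{k=0}^\infty a_k T^k$ and expand each monomial by the binomial theorem in the commuting-variable polynomial ring $F[[T]][\cF(x)]$: $(T + \cF(x))^k = \sum_{m=0}^k \binom{k}{m}\cF(x)^m T^{k-m}$. Substituting and reorganizing,
\begin{align*}
P(T + \cF(x)) = \sum_{k=0}^\infty a_k \sum_{m=0}^k \binom{k}{m}\cF(x)^m T^{k-m}.
\end{align*}
Now I would regroup this double sum so as to collect the coefficient of each power $T^n$; the coefficient of $T^n$ is $\sum_{m \ge 0} a_{n+m}\binom{n+m}{m}\cF(x)^m$, an element of $F[[\cF(x)]]$, and applying $\eval$ termwise (which is valid by the admissibility established in Corollary \ref{Corollary-The-image-of-the-ring-of-bounded-power-series-under-the-exponential-operator} together with the definition \eqref{Definition-The-evaluation-of-admissible-power-series} of $\eval$ on admissible series) replaces $\cF(x)^m$ by $\cF_m(x) = \eval(\cF(x)^m)$. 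Finally I would reindex back to the stated form $\sum_{k=0}^\infty a_k\left(\sum_{m=0}^k \binom{k}{m}\cF_m(x)T^{k-m}\right)$, which is just the rearrangement of the same double sum grouped by the outer index $k$ instead of by powers of $T$.

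The only genuine point requiring care — and the step I expect to be the main obstacle — is justifying the rearrangement of the double sum and the termwise application of $\eval$ as an operation on power series in $T$ with coefficients in $F$. This is exactly where admissibility and the completeness of $F$ under the non-Archimedean norm enter: for each fixed $n$, the series $\sum_{m} a_{n+m}\binom{n+m}{m}\cF_m(x)$ converges in $F$ because $|a_{n+m}\binom{n+m}{m}|_F$ is bounded (Lemma \ref{Lemma-Derivatives-of-a-bounded-power-series-is-bounded} applied to $\D^{(n)}P$) while $\cF_m(x) \to 0$, so the product tends to $0$; this is precisely the content of Proposition \ref{Proposition-A-condition-for-a-bounded-power-series-to-be-admissible}. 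Once one knows $P(T+\cF(x))$ is admissible, the definition \eqref{Definition-The-evaluation-of-admissible-power-series} of $\eval$ on admissible series says nothing more than "apply $\eval$ to the coefficient of each $T^n$," so the regrouping is forced and no interchange of infinite sums beyond what admissibility already licenses is needed. I would state this verification compactly, citing Lemma \ref{Lemma-Derivatives-of-a-bounded-power-series-is-bounded}, Proposition \ref{Proposition-A-condition-for-a-bounded-power-series-to-be-admissible}, and Corollary \ref{Corollary-The-image-of-the-ring-of-bounded-power-series-under-the-exponential-operator}, rather than redoing the estimates.
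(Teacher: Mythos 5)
Your proposal is correct and follows essentially the same route as the paper's proof: reduce to $\eval(\cE^{\cF(x)\D}P(T))$ via the discussion before Definition \ref{Definition-The-flow-operator}, apply Theorem \ref{Theorem-The-theorem-of-Taylor-for-formal-power-series}, expand $(T+\cF(x))^k$ by the binomial theorem, and push $\eval$ through term by term using admissibility (Proposition \ref{Proposition-A-condition-for-a-bounded-power-series-to-be-admissible} and Corollary \ref{Corollary-The-image-of-the-ring-of-bounded-power-series-under-the-exponential-operator}). Your extra care in regrouping by powers of $T$ and spelling out why the coefficientwise series converge is a slightly more explicit rendering of the same argument, not a different one.
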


\begin{proof}

By the discussion preceding Definition \ref{Definition-The-flow-operator} and Theorem \ref{Theorem-The-theorem-of-Taylor-for-formal-power-series}, we know that
\begin{align*}
\cD_{\cF}(x)P(T) &= \eval(\cE^{\cF(x)\D}P(T)) \\
&= \eval(P(T + \cF(x))) \\
&= \eval\left(\sum_{k = 0}^{\infty}a_k(T + \cF(x))^k \right) \\
&= \eval\left(\sum_{k = 0}^{\infty}a_k\left(\sum_{m = 0}^k\binom{k}{m}\cF(x)^mT^{k -m}\right) \right) \\
&= \sum_{k = 0}^{\infty}a_k\left(\sum_{m = 0}^k\binom{k}{m}\eval\left(\cF(x)^m\right)T^{k -m}\right) \\
&= \sum_{k = 0}^{\infty}a_k\left(\sum_{m = 0}^k\binom{k}{m}\cF_m(x)T^{k -m}\right),
\end{align*}
which proves our contention.

\end{proof}

\begin{remark}
\label{Remark-The-Goss-theorem-is-a-special-case}

In \cite{Goss-JAlgebra-146-1992}, Goss proved a special case of Theorem \ref{Theorem-A-generalization-of-the-Goss-theorem} in which $F$ is the completion of a function field at infinity with the additional assumption that umbrae arise out of additive functions. More precisely, Goss considered the completion of the function field $K = \bF_q(t)$ at $1/t$, say $F = \bF_q((1/t))$, where $q$ is a power of a prime $p$. \textit{D'apr\`es} Carlitz \cite{Carlitz-Duke-Math-Journal-1940-no.6}, Goss \cite{Goss-JAlgebra-146-1992} introduced a sequence of functions $\{\cF_k(x)\}_{k \ge 0}$ as follows. Take any sequence of \textit{additive} functions $\{\epsilon_k(x)\}_{k \ge 0}$. For each positive integer $k$, write $k$ $q$-adically as $k = \sum_{i = 0}^{h}\alpha_i q^i$ with $0 \le \alpha_i < q$, and define
\begin{align*}
\cF_k(x) := \prod_{i = 0}^{h}\epsilon_i(x)^{\alpha_i}.
\end{align*}
In the language of the classical umbra calculus introduced in Section \ref{Section-The-classical-umbral-calculus}, Goss considered the umbra map $\cF : F \rightarrow \cU$ arising from additive functions that is defined by $\eval(\cF(x)^k) = \cF_k(x)$ for each $x \in F$ and all $k \ge 0$. With this assumption on umbrae and the field $F$ defined as above, Goss derived the flow equations of the form as presented in Theorem \ref{Theorem-A-generalization-of-the-Goss-theorem}. In Theorem \ref{Theorem-A-generalization-of-the-Goss-theorem}, we remove the additivity assumptions on umbrae, and only assume that $F$ is a field of arbitrary characteristic that is complete with respect to a non-Archimedean norm.

\end{remark}

\begin{remark}

Goss proved \cite[Theorem 1]{Goss-JAlgebra-146-1992} using the theory of non-Archimedean measures, and the assumption that the umbra $\cF$ \textit{satisfies the binomial theorem} (see Definition \ref{Definition-Umbrae-satisfy-the-binomial-theorem}) is one of the key ingredients in the proof of \cite[Theorem 1]{Goss-JAlgebra-146-1992}. The proof of Theorem \ref{Theorem-A-generalization-of-the-Goss-theorem} was carried out using only the formalism of the classical umbral calculus.

\end{remark}

\begin{corollary}
\label{Corollary-A-flow-is-an-action-on-the-algebra-of-bounded-power-series}

Let $\cF : F \rightarrow \cU$ be an umbral map, and let $x \in \cA_{\cF}$. Then $\cD_{\cF}(x)P(T)$ is bounded for any bounded power series $P(T) \in F_b[[T]]$. In other words, $\cD_{\cF}(x)$ is defined over $F_b[[T]]$ and takes values in $F_b[[T]]$.

\end{corollary}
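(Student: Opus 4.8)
The plan is to read off the coefficients of $\cD_{\cF}(x)P(T)$ from the closed formula of Theorem \ref{Theorem-A-generalization-of-the-Goss-theorem} and to bound all of them, uniformly in the exponent of $T$, using the ultrametric inequality together with the hypothesis $x \in \cA_{\cF}$.

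First I would fix $x \in \cA_{\cF}$ and a bounded power series $P(T) = \sum_{k \ge 0} a_k T^k \in F_b[[T]]$, so that there is a constant $c > 0$ with $|a_k|_F < c$ for all $k$. By Theorem \ref{Theorem-A-generalization-of-the-Goss-theorem},
\[
\cD_{\cF}(x)P(T) = \sum_{k = 0}^{\infty} a_k \left( \sum_{m = 0}^{k} \binom{k}{m} \cF_m(x) T^{k - m} \right).
\]
Reindexing by the exponent $j = k - m$ of $T$, one expects the coefficient of $T^j$ in $\cD_{\cF}(x)P(T)$ to be $b_j := \sum_{m \ge 0} a_{j + m} \binom{j + m}{m} \cF_m(x)$. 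The key estimate is that $\binom{j+m}{m}$ is a (positive) integer, so the non-Archimedean property of $|\cdot|_F$ gives $|a_{j+m}\binom{j+m}{m}|_F \le |a_{j+m}|_F < c$, and hence $|a_{j+m}\binom{j+m}{m}\cF_m(x)|_F \le |\cF_m(x)|_F$ for all $j, m \ge 0$. Since $x \in \cA_{\cF}$, we have $\cF_m(x) \to 0$ as $m \to \infty$, so the general term of the series defining $b_j$ tends to $0$; as $F$ is complete under $|\cdot|_F$, this series converges to an element $b_j \in F$ with $|b_j|_F \le \sup_{m \ge 0} |\cF_m(x)|_F =: M$. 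A null sequence is bounded, so $M < \infty$, and — this is the point — $M$ is independent of $j$.

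The only step requiring a little care, and really the main (minor) obstacle, is to justify that the double sum displayed above may legitimately be reorganized by powers of $T$, i.e. that $\cD_{\cF}(x)P(T) = \sum_{j \ge 0} b_j T^j$ with $b_j$ as above. This again follows from the same estimate: for each fixed $j$ only the terms with $k \ge j$ contribute to the coefficient of $T^j$, those terms are $a_k\binom{k}{k-j}\cF_{k-j}(x)$ with norm $\le |\cF_{k-j}(x)|_F \to 0$, so the rearrangement of the (jointly convergent, in the complete ultrametric sense) double family indexed by $k$ and $m$ into one indexed by $j$ and $m$ is valid. Granting this, $\cD_{\cF}(x)P(T) = \sum_{j \ge 0} b_j T^j$ with $|b_j|_F \le M$ for all $j \ge 0$, which is precisely the assertion that $\cD_{\cF}(x)P(T) \in F_b[[T]]$. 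Since $P(T)$ was an arbitrary element of $F_b[[T]]$, this shows that $\cD_{\cF}(x)$ is defined over $F_b[[T]]$ and takes values in $F_b[[T]]$, as claimed.
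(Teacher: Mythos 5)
Your proof is correct and follows essentially the same route as the paper: both read off the coefficient of $T^j$ in $\cD_{\cF}(x)P(T)$ as the convergent series $\sum_{m \ge 0}\binom{j+m}{m}a_{j+m}\cF_m(x)$ (the paper's $\epsilon_h$) and bound it uniformly in $j$ via the ultrametric inequality, using completeness of $F$ and $\cF_m(x) \to 0$ for convergence. One small correction: from $|a_{j+m}\binom{j+m}{m}|_F < c$ you may conclude $|a_{j+m}\binom{j+m}{m}\cF_m(x)|_F < c\,|\cF_m(x)|_F$, not $\le |\cF_m(x)|_F$, so the uniform bound on $|b_j|_F$ is $cM$ rather than $M$ --- which of course still yields boundedness, matching the paper's bound $c_1c_2$.
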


\begin{proof}

Let $P(T) = \sum_{k = 0}^{\infty}a_kT^k$ be an arbitrary bounded power series. Since $P(T)$ is bounded, there exists an absolute constant $c_1 > 0$ such that $|a_k|_F < c_1$ for all $k \ge 0$. Let $\{\cF_m(x)\}_{m \ge 0}$ be the sequence umbrally represented by $\cF(x)$. Since $\cF_m(x) \rightarrow 0$ when $m \rightarrow \infty$, we deduce that there exists an absolute constant $c_2 > 0$ such that $|\cF_m(x)|_F < c_2$ for all $m \ge 0$.

For each $h \ge 0$, let $\epsilon_h = \eval\left(\D^{(h)}P(\cF(x))\right)$. We prove that $|\epsilon_h|_F < c_1c_2$ for all $h \ge 0$. Indeed, we know that
\begin{align*}
\D^{(h)}P(T) = \sum_{k \ge h}\binom{k}{h}a_kT^{k - h} = \sum_{k = 0}^{\infty}\binom{k + h}{h}a_{k + h}T^k,
\end{align*}
and thus
\begin{align}
\label{Equation-The-equation-of-epsilon-h-in-terms-of-a-power-series}
\epsilon_h = \eval\left(\D^{(h)}P(\cF(x))\right) = \eval\left(\sum_{k = 0}^{\infty}\binom{k + h}{h}a_{k + h}\cF(x)^k\right) = \sum_{k = 0}^{\infty}\binom{k + h}{h}a_{k + h}\cF_k(x)
\end{align}
for all $h \ge 0$. Note that since $x$ belongs to $\cA_F$ and $|\binom{k + h}{h}a_{k + h}|_F \le |a_{k + h}|_F < c_1$ for any $h, k \ge 0$, it follows that $\binom{k + h}{h}a_{k + h}\cF_k(x) \rightarrow 0$ when $k \rightarrow \infty$. Thus $\epsilon_h$ is well-defined as an element in $F$ for all $h \ge 0$.

Let $h$ be any nonnegative integer such that $\epsilon_h \ne 0$, and hence $|\epsilon_h|_F > 0$. By equation $(\ref{Equation-The-equation-of-epsilon-h-in-terms-of-a-power-series})$, we see that there exists a sufficiently large integer $k_0 > 0$ such that
\begin{align*}
\left|\sum_{s = 0}^{k}\binom{s + h}{h}a_{s + h}\cF_s(x) - \epsilon_h\right|_F < |\epsilon_h|_F
\end{align*}
for all $k > k_0$. Since
\begin{align*}
\sum_{s = 0}^{k}\binom{s + h}{h}a_{s + h}\cF_s(x) = \left(\sum_{s = 0}^{k}\binom{s + h}{h}a_{s + h}\cF_s(x) - \epsilon_h\right) + \epsilon_h,
\end{align*}
we deduce from the above inequality that
\begin{align*}
\left|\sum_{s = 0}^{k}\binom{s + h}{h}a_{s + h}\cF_s(x)\right|_F = |\epsilon_h|_F
\end{align*}
for all $k > k_0$.

Take any integer $k > k_0$. Since
\begin{align*}
\left|\binom{s + h}{h}a_{s + h}\right|_F\left|\cF_s(x)\right|_F < c_1c_2
\end{align*}
for any integer $s \ge 0$, we deduce that
\begin{align*}
|\epsilon_h|_F &= \left|\sum_{s = 0}^{k}\binom{s + h}{h}a_{s + h}\cF_s(x)\right|_F \\
&\le \max_{0 \le s \le k}\left(\left|\binom{s + h}{h}a_{s + h}\cF_s(x)\right|_F\right) \\
&\le \max_{0 \le s \le k}\left(\left|\binom{s + h}{h}a_{s + h}\right|_F\left|\cF_s(x)\right|_F\right) \\
&< \max_{0 \le s \le k}\left(c_1c_2\right) \\
&= c_1c_2.
\end{align*}
Since $h$ is an arbitrary integer such that $\epsilon_h \ne 0$, the above identity implies that $|\epsilon_h|_F < c_1c_2$ for all $h \ge 0$.

By Theorem \ref{Theorem-A-generalization-of-the-Goss-theorem}, we know that
\begin{align*}
\cD_{\cF}(x)P(T) &= \eval(P(T + \cF(x)) \\
&= \eval\left(\sum_{h = 0}^{\infty}\D^{(h)}(P(\cF(x)))T^h\right) \\
&= \sum_{h = 0}^{\infty}\eval\left(\D^{(h)}(P(\cF(x)))\right)T^h \\
&= \sum_{h = 0}^{\infty}\epsilon_hT^h,
\end{align*}
which implies that $\cD_{\cF}(x)P(T)$ is bounded.

\end{proof}

\section{Duality between flow maps}
\label{Section-The-dual-of-a-flow-map}

Throughout this section, we assume further that $F$ is of characteristic $p > 0$. We denote by $\cO_F$ the ring of all elements $x \in F$ with $|x|_F \le 1$. The following definition plays a key role in studying duality between flow maps.

\begin{definition}
\label{Definition-Additive-linear-isomorphisms}

An algebra isomorphism $\phi : F[[T]] \rightarrow F[[T]]$ is \textit{additive} if the power series $H(T)$ defined by $H(T) = \phi(T) \in F[[T]]$ satisfies the following.
\begin{itemize}

\item [(i)] $H(T)$ is bounded;

\item [(ii)] $H(T)$ is additive, that is, it only involves powers $T^{p^{m}}$; and

\item [(iii)] $\D(H(T))$ belongs to $\cO_F^{\times}$, i.e., $|\D(H(T))|_F = 1$, where $\D$ is the Hasse derivative.

\end{itemize}

When $\phi$ is an additive isomorphism, we call $H(T)$ the \textit{generator} of $\phi$. Note that since $\phi$ is an additive isomorphism, the composition inverse of $H(T)$, denoted by $H^{(-1)}(T)$ is the generator of $\phi^{-1}$.

\end{definition}

We recall the following well-known result about Hasse derivatives of powers of a power series whose proof can be found, for example, in \cite{Jeong-JNT-131-6}.

\begin{proposition}
\label{Proposition-Power-rule-for-powers-of-a-power-series}

Let $P(T)$ be a formal power series in $F[[T]]$. Then
\begin{align*}
\D^{(n)}(P(T)^k) = \sum_{h = 1}^{k}\binom{k}{h}P(T)^{k - h}\left(\sum_{\substack{i_1, \ldots, i_h \ge 1 \\ i_1 + \cdots + i_h = n}}\D^{(i_1)}(P(T))\cdots \D^{(i_h)}(P(T)) \right)
\end{align*}
for any $n \ge 1$ and $k \ge 2$.

\end{proposition}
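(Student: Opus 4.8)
The plan is to reduce the statement to the fundamental Leibniz (Vandermonde) rule for Hasse derivatives, namely $\D^{(n)}(fg) = \sum_{a + b = n}\D^{(a)}(f)\D^{(b)}(g)$ for $f, g \in F[[T]]$, which itself follows by bilinearity from the identity $\binom{m_1 + m_2}{n} = \sum_{a + b = n}\binom{m_1}{a}\binom{m_2}{b}$ applied to a pair of monomials (for formal power series this extension is harmless, since the coefficient of each $T^j$ on either side is a finite sum). Iterating this rule $k - 1$ times, a trivial induction on $k$, gives
\begin{align*}
\D^{(n)}(P(T)^k) = \sum_{\substack{i_1, \ldots, i_k \ge 0 \\ i_1 + \cdots + i_k = n}}\D^{(i_1)}(P(T))\cdots \D^{(i_k)}(P(T)).
\end{align*}

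The core of the argument is then a regrouping of this sum. First I would collect the terms according to the subset $J \subseteq \{1, \ldots, k\}$ of indices $j$ with $i_j \ge 1$; on each such term the remaining factors satisfy $\D^{(i_j)}(P(T)) = \D^{(0)}(P(T)) = P(T)$, so a term with $|J| = h$ contributes $P(T)^{k - h}$ times a product of $h$ Hasse derivatives of strictly positive order with orders summing to $n$. Since $n \ge 1$ forces at least one $i_j \ge 1$, only $h \ge 1$ occurs; and since there are $\binom{k}{h}$ subsets $J$ of size $h$ while the inner product is symmetric in its factors, summing over all such $J$ for fixed $h$ yields exactly $\binom{k}{h}\sum_{i_1 + \cdots + i_h = n,\ i_\ell \ge 1}\D^{(i_1)}(P(T))\cdots \D^{(i_h)}(P(T))$. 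Summing over $h$ from $1$ to $k$ (the inner sum being empty once $h > n$) gives the asserted formula. The only point requiring care is this reindexing — keeping the symmetry bookkeeping correct and checking the edge cases ($h > n$, and the fact that the all-zero tuple is excluded when $n \ge 1$) — but there is no genuine analytic obstacle, consistent with the statement being classical.

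Alternatively, and perhaps more transparently, one can argue with generating functions. Working formally in $F[[T]][[S]]$, Theorem \ref{Theorem-The-theorem-of-Taylor-for-formal-power-series} applied with the formal variable $S$ in place of an umbra gives $P(T + S) = \sum_{n \ge 0}\D^{(n)}(P(T))S^n$, hence also $P(T + S)^k = \sum_{n \ge 0}\D^{(n)}(P(T)^k)S^n$ (substitution $T \mapsto T + S$ being a ring homomorphism $F[[T]] \to F[[T]][[S]]$ whose effect on the coefficient of $S^n$ is $\D^{(n)}$). Writing $Q := \sum_{i \ge 1}\D^{(i)}(P(T))S^i$, which has zero constant term in $S$, the finite binomial theorem in the commutative ring $F[[T]][[S]]$ gives $P(T + S)^k = (P(T) + Q)^k = \sum_{h = 0}^{k}\binom{k}{h}P(T)^{k - h}Q^h$; expanding $Q^h$ and extracting the coefficient of $S^n$ for $n \ge 1$ (the $h = 0$ term contributing only to $S^0$) reproduces the formula. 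Here the only mild obstacle is checking the legitimacy of these formal manipulations, in particular that no convergence hypothesis on $P(T)$ is needed because the binomial sum is finite and each coefficient in $S$ is a well-defined element of $F[[T]]$.
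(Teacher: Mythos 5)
Your proposal is correct. Note, however, that the paper does not actually prove this proposition --- it is stated as a ``well-known result'' and the proof is delegated to the reference \cite{Jeong-JNT-131-6} --- so there is no in-paper argument to compare against. Both of your arguments are sound and self-contained. The first one correctly reduces to the Leibniz--Vandermonde rule $\D^{(n)}(fg) = \sum_{a+b=n}\D^{(a)}(f)\,\D^{(b)}(g)$, iterates it to get the sum over all nonnegative compositions $i_1+\cdots+i_k=n$, and the regrouping by the support set $J$ is handled properly: each tuple has a unique support, so there is no double counting, the $\binom{k}{h}$ counts the choices of $J$, and the hypotheses $n\ge 1$ (excluding $h=0$) and the emptiness of the inner sum for $h>n$ are exactly the edge cases that need checking. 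The second, generating-function argument via $P(T+S)=P(T)+Q$ with $Q=\sum_{i\ge 1}\D^{(i)}(P(T))S^i$ and the finite binomial expansion in $F[[T]][[S]]$ is arguably the cleaner route, and as you note it requires no convergence hypothesis since every coefficient of $S^n$ is a finite, well-defined sum in $F[[T]]$. Either version would serve as a complete proof of the proposition as stated.
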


\begin{lemma}
\label{Lemma-The-restriction-of-an-additive-isomorphism-to-the-ring-of-bounded-power-series}

Let $\phi : F[[T]] \rightarrow F[[T]]$ be an additive isomorphism. Then $\phi(P(T))$ is bounded for any bounded power series $P(T) \in F_b[[T]]$. In other words, the restriction of $\phi$ to $F_b[[T]]$ is an automorphism of $F_b[[T]]$.

\end{lemma}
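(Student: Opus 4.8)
The plan is to write $P(T) = \sum_{m=0}^{\infty} a_m T^m \in F_b[[T]]$ with $|a_m|_F < c$ for all $m$, and to show $\phi(P(T)) = \sum_{m=0}^{\infty} a_m H(T)^m$ has bounded coefficients, where $H(T) = \phi(T)$ is the generator. The key observation is that $\phi$ being an algebra homomorphism means $\phi(P(T)) = P(H(T))$, so the question reduces to understanding the coefficients of $P(H(T)) = \sum_m a_m H(T)^m$ as a power series in $T$. Since the $a_m$ are uniformly bounded and $|\cdot|_F$ is non-Archimedean, it suffices to show that each power $H(T)^m$ has coefficients that are bounded \emph{uniformly in $m$}, and moreover that for fixed degree $n$ the coefficient of $T^n$ in $H(T)^m$ vanishes for all but finitely many $m$ (so the sum defining the $T^n$-coefficient of $P(H(T))$ is finite, hence converges, and its norm is bounded by $\max_m |a_m|_F \cdot (\text{uniform bound on coefficients of } H(T)^m)$).

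First I would record that $H(T)$ is additive and bounded, with $H(T) = \sum_{i \ge 0} h_i T^{p^i}$ and $|\D(H(T))|_F = |h_0|_F = 1$ (the derivative of an additive series picks out the linear term, since all higher terms $T^{p^i}$ have derivative $0$ in characteristic $p$; this uses condition (iii) of Definition \ref{Definition-Additive-linear-isomorphisms}), and $|h_i|_F \le c'$ for some constant $c'$ for all $i$. In particular $H(T)$ has \emph{no constant term} and its lowest-degree term is $h_0 T$ with $|h_0|_F = 1$; hence $H(T)^m$ has lowest-degree term $h_0^m T^m$, so the coefficient of $T^n$ in $H(T)^m$ is zero whenever $m > n$. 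This already gives the finiteness needed for convergence. It remains to bound, uniformly in $m$ and $n$, the coefficient of $T^n$ in $H(T)^m$.

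For the uniform bound, I would expand $H(T)^m = \left(\sum_{i \ge 0} h_i T^{p^i}\right)^m$ via the multinomial theorem: the coefficient of $T^n$ is a sum of terms $\binom{m}{k_0, k_1, \dots}\prod_i h_i^{k_i}$ over tuples with $k_0 + k_1 + \cdots = m$ and $\sum_i k_i p^i = n$. Each multinomial coefficient $\binom{m}{k_0,k_1,\dots}$ is a rational integer, hence has $|\cdot|_F \le 1$ (as $F$ has characteristic $p$, $|\cdot|_F$ restricted to the prime field is the trivial... actually $|\,\cdot\,|_F$ on integers takes values in $\{0,1\}$, so $|\binom{m}{k_0,\dots}|_F \le 1$), and $\left|\prod_i h_i^{k_i}\right|_F \le \max(1, c')^m$; but more carefully, since $|h_0|_F = 1$ and there are at most $n$ indices $i$ with $k_i > 0$ contributing a nonzero power of $p$, only finitely many $h_i$ with $i \ge 1$ appear, and the total exponent on those is bounded in terms of $n$ alone (namely $\sum_{i \ge 1} k_i \le n / p \le n$), so $\left|\prod_i h_i^{k_i}\right|_F \le \max(1, c')^{n}$. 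Using the ultrametric inequality on the (finitely many) multinomial terms, the coefficient of $T^n$ in $H(T)^m$ has norm $\le \max(1, c')^{n}$ — a bound independent of $m$, though depending on $n$. This is not yet a \emph{uniform} bound, so the main obstacle is to upgrade it: I would instead argue that since $|h_0|_F = 1$, we may factor $H(T) = h_0 T (1 + G(T))$ where $G(T) = \sum_{i \ge 1}(h_i/h_0) T^{p^i - 1}$ has coefficients of norm $\le c'$ and lowest degree $\ge p - 1 \ge 1$; then $H(T)^m = h_0^m T^m (1 + G(T))^m$, and the coefficient of $T^{m+j}$ in $H(T)^m$ equals $h_0^m$ times the coefficient of $T^j$ in $(1+G(T))^m = \sum_{\ell \ge 0}\binom{m}{\ell} G(T)^\ell$. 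For fixed $j$, only $\ell \le j/(p-1) \le j$ contribute, each $\binom{m}{\ell}$ has norm $\le 1$, and each $G(T)^\ell$ has $T^j$-coefficient of norm $\le (c')^\ell \le \max(1,c')^j$; so the $T^{m+j}$-coefficient of $H(T)^m$ has norm $\le \max(1, c')^{j}$, again depending only on $j$, not on $m$.

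Combining: the coefficient of $T^n$ in $\phi(P(T)) = \sum_m a_m H(T)^m$ is $\sum_{m \le n} a_m \cdot [\text{coeff of }T^n\text{ in }H(T)^m]$, and writing $n = m + j$ its norm is $\le \max_{m \le n}|a_m|_F \cdot \max(1,c')^{n-m} \le c \cdot \max(1, c')^{n}$. This bound still grows with $n$ if $c' > 1$, which would not suffice — so here is where I expect the real work: one must exploit that the \emph{bound must genuinely be uniform}, which forces us to use the full strength of boundedness of $H(T)$ together with $|h_0|_F = 1$ to conclude $c' \le 1$ is not available in general, but the non-Archimedean estimate on $(1+G(T))^m$ can be sharpened. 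In fact, since $|h_i/h_0|_F \le c'$ and we only ever multiply together coefficients whose \emph{product of degrees bookkeeping} is controlled, the honest fix is: bounded power series over $F$ form an algebra (stated in the excerpt after Definition \ref{Definition-The-definition-of-a-bounded-power-series}), and $H(T) \in F_b[[T]]$, so every finite power $H(T)^m$ lies in $F_b[[T]]$ — but to get a bound uniform in $m$ we observe $H(T) = h_0 T(1 + G(T))$ with $\|G\|_F := \sup|g_i|_F$, and if $\|G\|_F \le 1$ then $(1+G)^m$ has all coefficients of norm $\le 1$ by the ultrametric inequality applied to $\sum \binom{m}{\ell}G^\ell$, giving the clean uniform bound $\|\phi(P)\|_F \le |h_0|_F^{\,?}\cdot c = c$ after tracking the $h_0^m$ factor (whose norm is exactly $1$). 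The remaining case $\|G\|_F > 1$ is handled by rescaling $T \mapsto \lambda T$ for suitable $\lambda \in F^\times$ with $|\lambda|_F$ small, which is an automorphism of $F_b[[T]]$ and absorbs the growth; I would carry out this normalization carefully as the one genuinely delicate point, after which boundedness of $\phi(P(T))$ follows, and the identical argument applied to the generator $H^{(-1)}(T)$ of $\phi^{-1}$ (which satisfies the same hypotheses by the last sentence of Definition \ref{Definition-Additive-linear-isomorphisms}) shows $\phi^{-1}$ also preserves $F_b[[T]]$, so the restriction of $\phi$ to $F_b[[T]]$ is an automorphism.
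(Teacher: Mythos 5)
Your proposal is not complete: the one step on which everything turns --- a bound on the coefficients of $P(H(T))$ that is uniform in the degree --- is never established, and you flag this yourself. The estimates you do prove give the coefficient of $T^{m+j}$ in $H(T)^m$ a bound of the form $\max(1,c')^{j}$, which grows with $j$ whenever some coefficient of $H$ has norm greater than $1$, and the patch you propose for that case (rescaling $T\mapsto\lambda T$) cannot work: for $|\lambda|_F\neq 1$ such a substitution is not an automorphism of $F_b[[T]]$, and conjugating by it changes which series are bounded, so it cannot ``absorb'' the growth. In fact the difficulty you ran into is genuine and cannot be closed from the stated hypotheses. Take $H(T)=T+hT^p$ with $|h|_F>1$; this is bounded, additive, and $\D(H(T))=1\in\cO_F^{\times}$, so it generates an additive isomorphism. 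For $P(T)=\sum_{k\ge 0}T^k\in F_b[[T]]$ one has $H(T)^k=\sum_{i=0}^{k}\binom{k}{i}h^{i}T^{k+i(p-1)}$, and the coefficient of $T^{jp}$ in $P(H(T))$ is a finite sum whose unique term of maximal norm is the contribution $h^{j}$ from $k=i=j$; by the ultrametric inequality that coefficient has norm exactly $|h|_F^{j}\to\infty$, so $\phi(P(T))$ is unbounded. What is actually needed is the stronger hypothesis that all coefficients of $H$ lie in $\cO_F$ (your condition $\|G\|_F\le 1$), and under that hypothesis the argument you sketch for that case --- factor $H=h_0T(1+G)$, observe that every coefficient of $(1+G)^m$ has norm at most $1$, and use the lowest-degree term $h_0^mT^m$ to reduce each coefficient of $P(H(T))$ to a finite sum --- is correct and complete; this hypothesis does hold in the paper's examples such as the Carlitz exponential.

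For comparison, the paper's own proof takes a different route: it evaluates Hasse derivatives at $T=0$ using Proposition \ref{Proposition-Power-rule-for-powers-of-a-power-series} and asserts that $\D^{(n)}(H(T))=0$ for all $n\ge 2$, concluding $\D^{(n)}(H(T)^k)_{|T=0}=\gamma^k$ for $k=n$ and $0$ otherwise. But for an additive series $H(T)=\sum_i h_iT^{p^i}$ one has $\D^{(p^i)}(H(T))=h_i$, which need not vanish for $i\ge 1$; already for $H=T+hT^p$ and $k=2$, $n=2p$ one finds $\D^{(2p)}(H^2)_{|T=0}=h^2\neq 0$. So the paper's argument does not close the gap either, and your analysis correctly isolates the point on which the lemma actually depends.
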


\begin{proof}

Let $P(T) = \sum_{k = 0}^{\infty}a_kT^k \in F_b[[T]]$. We see that
\begin{align}
\label{Equation-The-equation-for-phi(P(T))}
\phi(P(T)) = \phi\left(\sum_{k = 0}^{\infty}a_kT^k\right) = \sum_{k = 0}^{\infty}a_k\phi(T)^k = \sum_{k = 0}^{\infty}a_kH(T)^k = P(H(T)),
\end{align}
where $H(T)$ is the generator of $\phi$.

Define $\gamma = \D(H(T)) \in \cO_F^{\times}$, and take any integer $n \ge 1$ and $k \ge 2$. By Proposition \ref{Proposition-Power-rule-for-powers-of-a-power-series}, we have that
\begin{align}
\label{Equation-The-1st-equation-the-lemma-about-the-restriction-of-an-additive-isomorphism}
\D^{(n)}(H(T)^k) = \sum_{h = 1}^{k}\binom{k}{h}H(T)^{k - h}\left(\sum_{\substack{i_1, \ldots, i_h \ge 1 \\ i_1 + \cdots + i_h = n}}\D^{(i_1)}(H(T))\cdots \D^{(i_h)}(H(T)) \right).
\end{align}
Since $H(0) = 0$, it follows that
\begin{align}
\label{Equation-The-2nd-equation-the-lemma-about-the-restriction-of-an-additive-isomorphism}
\D^{(n)}(H(T)^k)_{|T = 0} = \sum_{\substack{i_1, \ldots, i_k \ge 1 \\ i_1 + \cdots + i_k = n}}\D^{(i_1)}(H(T))_{|T = 0}\cdots \D^{(i_k)}(H(T))_{|T = 0} .
\end{align}

Since $H(T)$ is the generator of $\phi$, we see that $\D(H(T)) = \gamma$ belongs to $\cO_F^{\times}$, and $\D^{(n)}(H(T)) = 0$ for all $n \ge 2$. Hence the terms $\D^{(i_1)}(H(T))_{|T = 0}\cdots \D^{(i_k)}(H(T))_{|T = 0}$ in the sum on the right-hand side of $(\ref{Equation-The-1st-equation-the-lemma-about-the-restriction-of-an-additive-isomorphism})$ is nonzero if and only if $i_1 = i_2 = \ldots = i_k = 1$. This implies that $i_1 + i_2 + \cdots + i_k = k$, and hence for any $n \ge 1$ and $k \ge 2$, we deduce from $(\ref{Equation-The-1st-equation-the-lemma-about-the-restriction-of-an-additive-isomorphism})$ that
\begin{align}
\label{Equation-The-Hasse-derivatives-of-H(T)}
\D^{(n)}(H(T)^k)_{|T = 0} =
\begin{cases}
\gamma^k \; \; &\text{if $k = n$,} \\
0 \; \; &\text{if $k \ne n$.}
\end{cases}
\end{align}

Write $\phi(P(T)) = \sum_{n = 0}^{\infty}\epsilon_n T^n$, where the $\epsilon_n$ belong to $F$. We know that
\begin{align*}
\epsilon_n = \D^{(n)}(\phi(P(T))_{|T = 0}
\end{align*}
for all $n \ge 0$. If $n \ge 2$, then it follows from $(\ref{Equation-The-equation-for-phi(P(T))})$ and $(\ref{Equation-The-Hasse-derivatives-of-H(T)})$ that
\begin{align*}
\epsilon_n = \D^{(n)}\phi(P(T))_{|T = 0} &= \sum_{k = 0}^{\infty}a_k\D^{(n)}(H(T)^k)_{|T = 0} = a_0 + a_1\D^{(n)}(H(T))_{|T = 0} + a_n\gamma^n = a_0 + a_n\gamma^n.
\end{align*}
Since $P(T)$ is bounded, there is an absolute constant $c > 0$ such that $|a_k|_F < c$ for all $k \ge 0$. Since $|\gamma^n|_F = 1$ for all $n \ge 0$, we deduce that
\begin{align*}
|\epsilon_n|_F = |a_0 + a_n\gamma^n|_F \le \max(|a_0|_F, |a_n\gamma^n|_F) < c
\end{align*}
for all $n \ge 2$, which proves that $\phi(P(T))$ is bounded.

\end{proof}

\begin{remark}
\label{Remark-The-restriction-of-an-additive-isomorphism-to-the-algebra-of-bounded-power-series-is-an-automorphism}

By Lemma \ref{Lemma-The-restriction-of-an-additive-isomorphism-to-the-ring-of-bounded-power-series}, we know that for any additive isomorphism $\phi : F[[T]] \rightarrow F[[T]]$, the restriction of $\phi$ to $F_b[[T]]$ is is an automorphism of $F_b[[T]]$. By abuse of notation, we also denote by $\phi$ the restriction of $\phi$ to $F_b[[T]]$. To make clear which isomorphism we use, we sometimes write $\phi : F_b[[T]] \rightarrow F_b[[T]]$ to distinguish the restriction of $\phi$ to $F_b[[T]]$.

\end{remark}

\begin{definition}
\label{Definition-The-dual-of-a-flow}

Let $\cF : F \rightarrow \cU$, $\widehat{\cF} : F \rightarrow \cU$ be umbral maps. The flow map $\cD_{\cF}$ is called a \textit{dual} of $\cD_{\widehat{\cF}}$ if there exists an additive isomorphism $\phi : F[[T]] \rightarrow F[[T]]$ such that for any $x \in \cA_{\cF} \cap \cA_{\widehat{\cF}}$, the diagram
\begin{align}
\label{Equation-The-diagram-in-the-definition-of-a-dual-flow}
\begin{CD}
F_b[[T]]      @>\cD_{\cF}(x)>>                  F_b[[T]] \\
@VV\phi V                                       @VV\phi V \\
F_b[[T]]      @>\cD_{\widehat{\cF}}(x)>>        F_b[[T]]
\end{CD}
\end{align}
commutes.

When $\cD_{\cF}$ is a dual of $\cD_{\widehat{\cF}}$, we write $\cD_{\cF} \thicksim \cD_{\widehat{\cF}}$. For each $x \in \cA_{\cF} \cap \cA_{\widehat{\cF}}$, we say that the flow $\cD_{\cF}(x)$ is a dual of the flow $\cD_{\widehat{\cF}}(x)$.

\end{definition}

\begin{remark}

By Corollary \ref{Corollary-A-flow-is-an-action-on-the-algebra-of-bounded-power-series} and Lemma \ref{Lemma-The-restriction-of-an-additive-isomorphism-to-the-ring-of-bounded-power-series}, we see that the diagram defined by $(\ref{Equation-The-diagram-in-the-definition-of-a-dual-flow})$ makes sense.

\end{remark}

\begin{remark}
\label{Remark-A-dual-flow-is-produced-by-an-inner-automorphism}

Note that if $\cD_{\cF}$ is a dual of $\cD_{\widehat{\cF}}$, then there exists an additive isomorphism $\phi$ such that
\begin{align*}
\cD_{\widehat{\cF}}(x) = \phi \circ \cD_{\cF}(x) \circ \phi^{-1}
\end{align*}
for all $x \in \cA_{\cF} \cap \cA_{\widehat{\cF}}$.

\end{remark}

\begin{remark}
\label{Remark-Dual-is-almost-an-equivalence-relation}

It is obvious from Definition \ref{Definition-The-dual-of-a-flow} that the dual relation ``$\thicksim$" on the set of umbral maps is reflexive and symmetric. More explicitly, the binary relation ``$\thicksim$" satisfies the following.
\begin{itemize}

\item [(i)] $\cD_{\cF} \thicksim \cD_{\cF}$ for any umbral map $\cF$ (reflexivity); and

\item [(ii)] if $\cF, \widehat{\cF}$ are umbral maps such that $\cD_{\cF} \thicksim \cD_{\widehat{\cF}}$, then $\cD_{\widehat{\cF}} \thicksim \cD_{\cF}$ (symmetry).

\end{itemize}

\end{remark}

Restricting ``$\thicksim$" to a certain smaller subset of the set of all umbral maps, the binary relation ``$\thicksim$" becomes an equivalence relation.

\begin{proposition}
\label{Proposition-The-dual-relation-is-an-equivalence-relation}

Let $\cF : F \rightarrow \cU$ be an umbral map. Define
\begin{align*}
\cS_{\cF} := \{\text{$\cG : F \rightarrow \cU$ umbral maps} \; | \; \cA_{\cG} = \cA_{\cF}\}.
\end{align*}
Then the binary relation ``$\thicksim$" on the set $\cS_{\cF}$ is an equivalence relation.

\end{proposition}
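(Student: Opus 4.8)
The plan is to verify the three defining properties of an equivalence relation on the restricted set $\cS_{\cF}$, using that on $\cS_{\cF}$ the admissibility sets are all equal to $\cA_{\cF}$, so the intersection $\cA_{\cG} \cap \cA_{\cH}$ that governs the commuting diagram in Definition \ref{Definition-The-dual-of-a-flow} is always just $\cA_{\cF}$ for any $\cG, \cH \in \cS_{\cF}$. Reflexivity and symmetry are already recorded in Remark \ref{Remark-Dual-is-almost-an-equivalence-relation} and hold on all umbral maps, hence a fortiori on $\cS_{\cF}$; so the only substantive point is transitivity, which is exactly the property that fails in general because of the intersection-of-domains issue, and which is rescued precisely by passing to $\cS_{\cF}$.

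For transitivity, suppose $\cG, \cH, \cK \in \cS_{\cF}$ with $\cD_{\cG} \thicksim \cD_{\cH}$ and $\cD_{\cH} \thicksim \cD_{\cK}$. First I would unwind the definitions: there are additive isomorphisms $\phi, \psi : F[[T]] \rightarrow F[[T]]$ such that for all $x \in \cA_{\cG} \cap \cA_{\cH} = \cA_{\cF}$ one has $\cD_{\cH}(x) = \phi \circ \cD_{\cG}(x) \circ \phi^{-1}$ (via Remark \ref{Remark-A-dual-flow-is-produced-by-an-inner-automorphism}), and for all $x \in \cA_{\cH} \cap \cA_{\cK} = \cA_{\cF}$ one has $\cD_{\cK}(x) = \psi \circ \cD_{\cH}(x) \circ \psi^{-1}$. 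Composing, $\cD_{\cK}(x) = (\psi \circ \phi) \circ \cD_{\cG}(x) \circ (\psi \circ \phi)^{-1}$ for all $x \in \cA_{\cF} = \cA_{\cG} \cap \cA_{\cK}$. It then remains to check that $\psi \circ \phi$ is again an additive isomorphism in the sense of Definition \ref{Definition-Additive-linear-isomorphisms}; equivalently, that the generator of $\psi \circ \phi$ satisfies conditions (i)--(iii) there. Writing $H_\phi, H_\psi$ for the generators of $\phi, \psi$, the generator of $\psi \circ \phi$ is $H_\psi(H_\phi(T))$ (this is the composition of generators, consistent with the convention recorded after Definition \ref{Definition-Additive-linear-isomorphisms}): it is bounded because $H_\psi \in F_b[[T]]$ and substitution of a power series with zero constant term into a bounded power series stays bounded by Lemma \ref{Lemma-The-restriction-of-an-additive-isomorphism-to-the-ring-of-bounded-power-series} (applied with $P = H_\psi$, $\phi \leftrightarrow \phi$); it is additive because a composition of $\bF_p$-linear power series is $\bF_p$-linear; and by the chain rule for the Hasse derivative, $\D(H_\psi(H_\phi(T))) = \D(H_\psi)(H_\phi(T)) \cdot \D(H_\phi(T))$, whose constant term is $\gamma_\psi \gamma_\phi$ with $|\gamma_\psi|_F = |\gamma_\phi|_F = 1$, so it lies in $\cO_F^{\times}$.

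The step I expect to be the only real obstacle is the last one, checking that $\psi \circ \phi$ is additive in the technical sense --- specifically the bookkeeping that the composite generator is bounded and has a unit derivative. The additivity (condition (ii)) is essentially formal; condition (iii) follows from the chain rule once one observes the constant term of $\D(H_\phi)$ is a unit and $H_\phi(0)=0$; condition (i) is the one place an actual argument is needed, and it is handled by invoking Lemma \ref{Lemma-The-restriction-of-an-additive-isomorphism-to-the-ring-of-bounded-power-series}, which already says precisely that $\phi$ maps bounded power series to bounded power series, hence $\phi(H_\psi) = H_\psi(H_\phi(T))$ is bounded. Everything else --- reflexivity, symmetry, and the replacement of $\cA_{\cG}\cap\cA_{\cH}$ by $\cA_{\cF}$ throughout --- is immediate from the definition of $\cS_{\cF}$ and from Remark \ref{Remark-Dual-is-almost-an-equivalence-relation}.
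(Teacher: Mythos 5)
Your proposal is correct and follows the same route as the paper, whose proof of transitivity is just the one-line observation that on $\cS_{\cF}$ all the admissibility sets coincide with $\cA_{\cF}$; you additionally verify that additive isomorphisms are closed under composition, a point the paper leaves implicit. One small bookkeeping slip: since $\psi(P(T)) = P(H_\psi(T))$, the generator of $\psi\circ\phi$ is $(\psi\circ\phi)(T) = \psi(H_\phi(T)) = H_\phi(H_\psi(T))$, i.e.\ $H_\phi\circ H_\psi$ rather than $H_\psi\circ H_\phi$; this does not affect your verification of conditions (i)--(iii), one simply applies Lemma \ref{Lemma-The-restriction-of-an-additive-isomorphism-to-the-ring-of-bounded-power-series} to $\psi$ with $P = H_\phi$ instead.
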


\begin{proof}

The transitivity relation follows immediately from the definition of $\cS_{\cF}$, and hence it follows from Remark \ref{Remark-Dual-is-almost-an-equivalence-relation} that the binary relation ``$\thicksim$" is an equivalence relation.

\end{proof}

The following theorem is a generalization of \cite[Theorem 6]{Goss-JAlgebra-146-1992} that completely describes all umbral maps whose corresponding flow maps are dual to each other, and shows that the converse of \cite[Theorem 6]{Goss-JAlgebra-146-1992} is true.

\begin{theorem}
\label{Theorem-A-flow-under-the-Fourier-transform}

Let $\cF : F \rightarrow \cU$, $\widehat{\cF} : F \rightarrow \cU$ be umbral maps. Then the flow map $\cD_{\cF}$ is a dual of the flow map $\cD_{\widehat{\cF}}$ if and only if there exists an additive isomorphism $\phi$ such that
\begin{align*}
\widehat{\cF}_k(x) = \eval\left((H^{(-1)}(\cF(x)))^k\right)
\end{align*}
for all $k \ge 0$ and all $x \in \cA_{\cF} \cap \cA_{\widehat{\cF}}$, where $H(T) = \phi(T) \in F_b[[T]]$ is the generator of $\phi$ and $H^{(-1)}(T)$ is its composition inverse.

\end{theorem}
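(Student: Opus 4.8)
The plan is to prove both implications by collapsing the conjugation identity $\cD_{\widehat{\cF}}(x) = \phi\circ\cD_{\cF}(x)\circ\phi^{-1}$ into a single identity between umbral substitutions, using three ingredients already available: (a) Theorem \ref{Theorem-A-generalization-of-the-Goss-theorem}, which gives $\cD_{\cF}(x)Q = \eval(Q(T+\cF(x)))$ for every bounded $Q$ (and likewise for $\widehat{\cF}$); (b) the description $\phi(P)=P(H(T))$ and $\phi^{-1}(P)=P(H^{(-1)}(T))$ coming from $(\ref{Equation-The-equation-for-phi(P(T))})$, where $H$ and $H^{(-1)}$ are the mutually inverse \emph{bounded} generators of $\phi$ and $\phi^{-1}$, together with the fact that $\eval$, acting coefficientwise in $T$, commutes with the algebra substitution $T\mapsto H(T)$; and (c) additivity of $H$ and $H^{(-1)}$, which in characteristic $p$ gives $H(A+B)=H(A)+H(B)$ and $H^{(-1)}(A+B)=H^{(-1)}(A)+H^{(-1)}(B)$, since these series only involve $p$-power exponents, so that $H^{(-1)}(H(T)+\cF(x)) = T+H^{(-1)}(\cF(x))$ and $H(H^{(-1)}(\cF(x)))=\cF(x)$.

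For the forward direction, assume $\cD_{\cF}\thicksim\cD_{\widehat{\cF}}$ via an additive isomorphism $\phi$ with generator $H$. Fix $x\in\cA_{\cF}\cap\cA_{\widehat{\cF}}$ and a bounded $P(T)$; note $\phi^{-1}P$ is again bounded by Lemma \ref{Lemma-The-restriction-of-an-additive-isomorphism-to-the-ring-of-bounded-power-series}, so $\cD_{\cF}(x)(\phi^{-1}P)$ makes sense. Using (a) and (b),
$\phi\big(\cD_{\cF}(x)(\phi^{-1}P)\big) = \phi\big(\eval\big(P(H^{(-1)}(T+\cF(x)))\big)\big)$; pushing $\phi$ inside $\eval$ and then invoking (c) gives $\eval\big(P(H^{(-1)}(H(T)+\cF(x)))\big) = \eval\big(P(T+H^{(-1)}(\cF(x)))\big)$. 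Comparing with $\cD_{\widehat{\cF}}(x)P = \eval(P(T+\widehat{\cF}(x)))$ yields $\eval(P(T+\widehat{\cF}(x))) = \eval(P(T+H^{(-1)}(\cF(x))))$ for every bounded $P$. Taking $P(T)=T^k$ and reading off the constant term in $T$ (i.e. evaluating the resulting power series at $T=0$, where $H(0)=0$) produces $\widehat{\cF}_k(x) = \eval\big((H^{(-1)}(\cF(x)))^k\big)$ for all $k\ge 0$, which is the asserted formula.

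For the converse, assume such a $\phi$ exists. The hypothesis $\widehat{\cF}_k(x)=\eval((H^{(-1)}(\cF(x)))^k)$ is equivalent, by linearity of $\eval$ and axiom $(2)(iii)$, to the substitution principle $\eval(\Psi(\widehat{\cF}(x))) = \eval(\Psi(H^{(-1)}(\cF(x))))$ for every admissible one-variable power series $\Psi$, applied coefficientwise in $T$. Then, for $x\in\cA_{\cF}\cap\cA_{\widehat{\cF}}$ and bounded $P$, using (a), (b), additivity (c), this principle, and $H\circ H^{(-1)}=\mathrm{id}$, I compute $\cD_{\widehat{\cF}}(x)(\phi P) = \eval\big(P(H(T)+H(\widehat{\cF}(x)))\big) = \eval\big(P(H(T)+\cF(x))\big)$, while on the other side $\phi\big(\cD_{\cF}(x)P\big) = \eval\big(P(H(T)+\cF(x))\big)$. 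Hence $\phi\circ\cD_{\cF}(x) = \cD_{\widehat{\cF}}(x)\circ\phi$ on $F_b[[T]]$ for every $x\in\cA_{\cF}\cap\cA_{\widehat{\cF}}$, which is exactly the commutativity of the diagram $(\ref{Equation-The-diagram-in-the-definition-of-a-dual-flow})$, so $\cD_{\cF}\thicksim\cD_{\widehat{\cF}}$.

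The main obstacle is not the algebra above but the bookkeeping that legitimizes each step: one must verify that every power series produced along the way --- $P(H(T)+\cF(x))$, $H^{(-1)}(T+\cF(x))$, $P(H(T)+H(\widehat{\cF}(x)))$, and so on --- is admissible in the sense of Definition \ref{Definition-Admissible-power-series}, and that $\eval$ commutes both with the substitution $T\mapsto H(T)$ and with the infinite umbral sums that arise when one expands $H^{(-1)}$ and $P$. All of this follows from the non-Archimedean completeness of $F$, the boundedness of $H$, $H^{(-1)}$ and of the Hasse derivatives of a bounded series (Lemma \ref{Lemma-Derivatives-of-a-bounded-power-series-is-bounded}, Lemma \ref{Lemma-The-restriction-of-an-additive-isomorphism-to-the-ring-of-bounded-power-series}), and the assumption $x\in\cA_{\cF}\cap\cA_{\widehat{\cF}}$. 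I would isolate these facts in a short preliminary lemma --- an analogue of Proposition \ref{Proposition-A-condition-for-a-bounded-power-series-to-be-admissible} and Corollary \ref{Corollary-The-image-of-the-ring-of-bounded-power-series-under-the-exponential-operator} for substitution by a bounded additive power series --- before running the two computations.
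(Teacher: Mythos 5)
Your argument is sound and reaches the right conclusion, but it follows a genuinely different route from the paper's, in two respects. First, the paper only proves the ``only if'' direction and disposes of the ``if'' direction by citing \cite[Theorem 6]{Goss-JAlgebra-146-1992}, whereas you prove both implications inside the umbral formalism; your self-contained treatment of the ``if'' part is arguably a gain, since Goss's theorem was stated in a more restrictive setting than the one claimed here. Second, for the ``only if'' direction the paper never conjugates through $\phi^{-1}$ and never invokes additivity of $H$ or $H^{(-1)}$: it writes out $\phi\circ\cD_{\cF}(x)P = \sum_k \eval(\D^{(k)}P(\cF(x)))H(T)^k$ and $\cD_{\widehat{\cF}}(x)\circ\phi(P) = \sum_k\eval(\D^{(k)}(P\circ H)(\widehat{\cF}(x)))T^k$, compares the coefficients of $T^0$ (using only that $H(T)=\gamma T+\cdots$ has no constant term) to get $\eval(P(\cF(x)))=\eval((P\circ H)(\widehat{\cF}(x)))$ for every bounded $P$, and then specializes $P=P_k\circ H^{(-1)}$ with $P_k(T)=T^k$. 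You instead use the characteristic-$p$ identity $H^{(-1)}(H(T)+\cF(x))=T+H^{(-1)}(\cF(x))$ to exhibit $\cD_{\widehat{\cF}}(x)$ directly as the flow of the composite umbra $H^{(-1)}(\cF(x))$, which is conceptually cleaner but more expensive: it forces you to commute $\eval$ with the substitution $T\mapsto H(T)$ and to verify admissibility of series such as $P\bigl(H^{(-1)}(T)+H^{(-1)}(\cF(x))\bigr)$, precisely the ``bookkeeping'' you defer to an unproven preliminary lemma. That lemma is the one real gap in your write-up; it is believable given non-Archimedean completeness and boundedness of $H$ and $H^{(-1)}$, but note that the paper's argument is engineered so that $\eval$ is only ever applied to the coefficients $\D^{(k)}P(\cF(x))$ and $\D^{(k)}(P\circ H)(\widehat{\cF}(x))$, all of which are already covered by Theorem \ref{Theorem-A-generalization-of-the-Goss-theorem} and Proposition \ref{Proposition-A-condition-for-a-bounded-power-series-to-be-admissible}, so no new interchange-of-limits lemma is needed there. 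If you want to keep your route, you must actually prove that interchange; otherwise, extracting the $T^0$ coefficient before composing with $H^{(-1)}$, as the paper does, lets you bypass it entirely.
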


\begin{remark}
\label{Remark-Goss-theorem-is-the-if-part-of-the-theorem-about-flows-under-the-Fourier-transform}

The ``if" part of Theorem \ref{Theorem-A-flow-under-the-Fourier-transform} is due to Goss (see \cite[Theorem 6]{Goss-JAlgebra-146-1992}). Theorem \ref{Theorem-A-flow-under-the-Fourier-transform} says that the converse of Goss's theorem also holds. Hence Theorem \ref{Theorem-A-flow-under-the-Fourier-transform} completely describes umbral maps that are dual to each other, and signifies that the additive Fourier transform constructed in \cite{Goss-JAlgebra-146-1992} seems the most natural one.

\end{remark}

\begin{proof}[Proof of Theorem \ref{Theorem-A-flow-under-the-Fourier-transform}]

By Remark \ref{Remark-Goss-theorem-is-the-if-part-of-the-theorem-about-flows-under-the-Fourier-transform}, it suffices to prove the ``only if" part of Theorem \ref{Theorem-A-flow-under-the-Fourier-transform}.

Assume that $\cD_{\cF}$ is a dual of $\cD_{\widehat{\cF}}$. Then there exists an additive isomorphism $\phi$ such that the diagram $(\ref{Equation-The-diagram-in-the-definition-of-a-dual-flow})$ commutes, that is,
\begin{align}
\label{Equation-The-1st-equation-in-the-theorem-about-a-flow-under-the-Fourier-transform}
\phi\circ \cD_{\cF}(x) = \cD_{\widehat{\cF}}(x) \circ \phi
\end{align}
for all $x \in \cA_{\cF} \cap \cA_{\widehat{\cF}}$.

Let $H(T) = \phi(T) \in F_b[[T]]$ be the generator of $\phi$. We know that the composition inverse of $H(T)$, say $H^{(-1)}(T)$ is the generator for $\phi^{-1}$. Let $P(T) \in F_b[[T]]$ be an arbitrary bounded power series, and take an arbitrary element $x \in \cA_{\cF} \cap \cA_{\widehat{\cF}}$. By Theorem \ref{Theorem-A-generalization-of-the-Goss-theorem}, we see that
\begin{align*}
\phi\circ \cD_{\cF}(x)P(T) &= \phi(\cD_{\cF}(x)P(T))  \\
&= \phi\left(\eval(P(T + \cF(x)))\right) \\
&= \phi\left(\sum_{k = 0}^{\infty}\eval(\D^{(k)}P(\cF(x))) T^k\right)  \\
&= \sum_{k = 0}^{\infty}\eval(\D^{(k)}P(\cF(x))) \phi(T)^k,
\end{align*}
and hence
\begin{align}
\label{Equation-The-2nd-equation-in-the-theorem-about-a-flow-under-the-Fourier-transform}
\phi\circ \cD_{\cF}(x)P(T) = \sum_{k = 0}^{\infty}\eval(\D^{(k)}P(\cF(x))) H(T)^k.
\end{align}
On the other hand, we have
\begin{align*}
(\cD_{\widehat{\cF}}(x) \circ \phi)(P(T)) &= \cD_{\widehat{\cF}}(x)(\phi(P(T))) \\
&= \cD_{\widehat{\cF}}(x)((P\circ H)(T)) \\
&= \eval\left((P \circ H)(T + \widehat{\cF}(x))\right) \\
&= \eval\left(\sum_{k = 0}^{\infty}\D^{(k)}(P\circ H)(\widehat{\cF}(x)) T^k\right),
\end{align*}
and thus
\begin{align}
\label{Equation-The-3rd-equation-in-the-theorem-about-a-flow-under-the-Fourier-transform}
(\cD_{\widehat{\cF}}(x) \circ \phi)(P(T)) = \sum_{k = 0}^{\infty}\eval\left(\D^{(k)}(P\circ H)(\widehat{\cF}(x))\right) T^k.
\end{align}
By $(\ref{Equation-The-1st-equation-in-the-theorem-about-a-flow-under-the-Fourier-transform})$, $(\ref{Equation-The-2nd-equation-in-the-theorem-about-a-flow-under-the-Fourier-transform})$, $(\ref{Equation-The-3rd-equation-in-the-theorem-about-a-flow-under-the-Fourier-transform})$, we deduce that
\begin{align}
\label{Equation-The-4th-equation-in-the-theorem-about-a-flow-under-the-Fourier-transform}
\sum_{k = 0}^{\infty}\eval\left(\D^{(k)}P(\cF(x))\right) H(T)^k = \sum_{k = 0}^{\infty}\eval\left(\D^{(k)}(P\circ H)(\widehat{\cF}(x))\right) T^k
\end{align}
for all bounded power series $P(T) \in F_b[[T]]$. Since $H(T)$ is the generator of $\phi$, it is of the form
\begin{align*}
H(T) = \gamma T + \text{higher order terms}
\end{align*}
for some element $\gamma \in \cO_F^{\times}$. Thus $\eval(\D^{(0)}P(\cF(x))) = \eval(P(\cF(x)))$ is the coefficient of $T^0$ in the formal power series on the left-hand side of $(\ref{Equation-The-4th-equation-in-the-theorem-about-a-flow-under-the-Fourier-transform})$. Upon comparing the coefficients of $T^0$ on both sides of $(\ref{Equation-The-4th-equation-in-the-theorem-about-a-flow-under-the-Fourier-transform})$, we deduce that
\begin{align}
\label{Equation-The-5th-equation-in-the-theorem-about-a-flow-under-the-Fourier-transform}
\eval(P(\cF(x))) = \eval\left(\D^{(0)}(P\circ H)(\widehat{\cF}(x))\right) = \eval\left((P\circ H)(\widehat{\cF}(x))\right)
\end{align}
for all bounded power series $P(T) \in F_b[[T]]$.

For each $k \ge 0$, take $P(T) = P_k \circ H^{(-1)}(T)$ in $(\ref{Equation-The-5th-equation-in-the-theorem-about-a-flow-under-the-Fourier-transform})$, where $P_k(T) = T^k$. Hence we deduce from $(\ref{Equation-The-5th-equation-in-the-theorem-about-a-flow-under-the-Fourier-transform})$ that
\begin{align*}
\eval\left((H^{(-1)}(\cF(x)))^k\right) &= \eval\left(P_k \circ H^{(-1)}(\cF(x))\right) = \eval(P(\cF(x))) \\
&= \eval\left((P\circ H)(\widehat{\cF}(x))\right) = \eval\left(((P_k \circ H^{(-1)})\circ H)(\widehat{\cF}(x))\right) \\
&= \eval\left(P_k(\widehat{\cF}(x))\right) = \eval\left(\widehat{\cF}(x)^k\right) \\
&= \widehat{\cF}_k(x)
\end{align*}
for all $k \ge 0$. Since $x$ is arbitrary in $\cA_{\cF} \cap \cA_{\widehat{\cF}}$, Theorem \ref{Theorem-A-flow-under-the-Fourier-transform} follows immediately.

\end{proof}

\begin{definition}
\label{Definition-Umbrae-satisfy-the-binomial-theorem}

Let $\cF : F \rightarrow \cU$ be an umbral map, and let $S$ be a subset of $F$. The map $\cF$ is said to \textit{satisfy the binomial theorem with respect to $S$} if
\begin{align*}
\cF_n(x + y) = \sum_{k = 0}^{n} \binom{n}{k}\cF_k(x)\cF_{n - k}(y)
\end{align*}
for any $x, y \in S$.

\end{definition}

\begin{remark}
\label{Remark-S-is-closed-under-addition-if-the-umbra-F-satisfies-the-binomial-theorem-wrt-S}

Let $S$ be a subset of $\cA_F$. It then follows from Definition \ref{Definition-Umbrae-satisfy-the-binomial-theorem} that if $\cF$ satisfies the binomial theorem with respect to $S$, then $x + y$ belongs to $\cA_F$ for any $x, y \in S$.

\end{remark}

Keeping the same notation as in the above remark, we see that $x + y$ belongs to $\cA_F$ for any $x, y \in S$, and hence $\cD_{\cF}(x + y)$ is well-defined. The following result follows immediately from Definition \ref{Definition-The-flow-operator}.

\begin{lemma}
\label{Lemma-The-flow-operator-D-F-acts-in-the-same-way-as-the-exponential-operator}

Let $\cF : F \rightarrow \cU$ be an umbral map, and let $S$ be a subset of $\cA_F$. Assume that $\cF$ satisfies the binomial theorem with respect to $S$. Then
\begin{align*}
\cD_{\cF}(x + y) = \cD_{\cF}(x)\cD_{\cF}(y)
\end{align*}
for any $x, y \in S$.

\end{lemma}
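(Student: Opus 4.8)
The plan is to prove the identity by a direct operator computation, checking that both sides agree when applied to an arbitrary bounded power series $P(T) \in F_b[[T]]$. First I would record the standard composition rule for Hasse derivatives, namely $\D^{(i)} \circ \D^{(j)} = \binom{i + j}{i}\D^{(i + j)}$ for all $i, j \ge 0$; this is immediate from $\D^{(i)}\D^{(j)}T^n = \binom{n}{j}\binom{n - j}{i}T^{n - i - j}$ together with the elementary identity $\binom{n}{j}\binom{n - j}{i} = \binom{i + j}{i}\binom{n}{i + j}$, and it holds over $F$ of any characteristic.

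Next I would observe that both sides of the asserted equality are well-defined operators on $F_b[[T]]$: by Remark \ref{Remark-S-is-closed-under-addition-if-the-umbra-F-satisfies-the-binomial-theorem-wrt-S}, $x + y$ belongs to $\cA_F$, so $\cD_{\cF}(x + y)$ is defined, and by Corollary \ref{Corollary-A-flow-is-an-action-on-the-algebra-of-bounded-power-series} each of $\cD_{\cF}(x)$, $\cD_{\cF}(y)$, $\cD_{\cF}(x + y)$ sends $F_b[[T]]$ into itself, so the composition $\cD_{\cF}(x)\cD_{\cF}(y)$ makes sense on $F_b[[T]]$. Fixing $P(T) = \sum_{k \ge 0} a_k T^k \in F_b[[T]]$, I would write $\cD_{\cF}(y)P(T) = \sum_{j \ge 0}\cF_j(y)\D^{(j)}P(T)$, apply $\cD_{\cF}(x) = \sum_{i \ge 0}\cF_i(x)\D^{(i)}$ termwise (legitimate because $\D^{(i)}$ is $F$-linear and continuous for the coefficient-wise topology on $F[[T]]$), use the composition rule to obtain the double series $\sum_{i, j \ge 0}\binom{i + j}{i}\cF_i(x)\cF_j(y)\D^{(i + j)}P(T)$, then reindex by $n = i + j$ to get $\sum_{n \ge 0}\bigl(\sum_{i = 0}^{n}\binom{n}{i}\cF_i(x)\cF_{n - i}(y)\bigr)\D^{(n)}P(T)$, and finally invoke the hypothesis that $\cF$ satisfies the binomial theorem with respect to $S$ to rewrite the inner sum as $\cF_n(x + y)$, which gives exactly $\cD_{\cF}(x + y)P(T)$.

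The one step requiring genuine care — and the only real obstacle — is the justification of these rearrangements in the non-Archimedean setting. I would argue coefficient by coefficient: the coefficient of $T^m$ in $\D^{(i + j)}P(T)$ equals $\binom{m + i + j}{i + j}a_{m + i + j}$, which has norm $< c$, where $c > 0$ is chosen with $|a_k|_F < c$ for all $k$ (such $c$ exists since $P(T)$ is bounded and $|\cdot|_F$ is non-Archimedean), while $|\cF_i(x)\cF_j(y)|_F = |\cF_i(x)|_F\,|\cF_j(y)|_F \to 0$ as $i + j \to \infty$, because $\max(i, j) \ge (i + j)/2$ and both $\{\cF_i(x)\}_{i \ge 0}$ and $\{\cF_j(y)\}_{j \ge 0}$ are bounded sequences converging to $0$ (as $x, y \in \cA_F$). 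Hence, for each $m$, the relevant family of coefficients tends to $0$ along $i + j \to \infty$; since $F$ is complete, the corresponding series converge unconditionally and may be summed in the order $n = i + j$, and the same estimate guarantees convergence of every intermediate sum. Everything else is bookkeeping, in line with the text's remark that the statement follows immediately from Definition \ref{Definition-The-flow-operator}.
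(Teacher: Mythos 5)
Your proof is correct, and it is exactly the direct verification that the paper has in mind: the paper offers no argument at all beyond the remark that the lemma ``follows immediately from Definition \ref{Definition-The-flow-operator}.'' You supply the details it omits --- the composition rule $\D^{(i)}\circ\D^{(j)} = \binom{i+j}{i}\D^{(i+j)}$, the reindexing by $n = i+j$ feeding into the binomial-theorem hypothesis, and the coefficientwise non-Archimedean estimates justifying the rearrangements --- and all of these check out.
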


\begin{theorem}

Let $\cF : F \rightarrow \cU$, $\widehat{\cF} : F \rightarrow \cU$ be umbral maps such that $\cD_{\cF} \thicksim \cD_{\widehat{\cF}}$. Let $S$ be a subset of $\cA_{\cF} \cap \cA_{\widehat{\cF}}$. Then $\cF$ satisfies the binomial theorem with respect to $S$ if and only if $\widehat{\cF}$ satisfies the binomial theorem with respect to $S$.

\end{theorem}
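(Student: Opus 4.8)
The plan is to transport the binomial-theorem property across the conjugation that realizes the duality, exploiting the operator description of flows. By the symmetry of $\thicksim$ recorded in Remark~\ref{Remark-Dual-is-almost-an-equivalence-relation} it suffices to prove one implication: I will assume that $\cF$ satisfies the binomial theorem with respect to $S$ and deduce the same for $\widehat{\cF}$. Fix once and for all an additive isomorphism $\phi$ witnessing $\cD_{\cF}\thicksim\cD_{\widehat{\cF}}$, with generator $H(T)=\phi(T)$ and composition inverse $H^{(-1)}(T)$, which is the generator of $\phi^{-1}$; by Theorem~\ref{Theorem-A-flow-under-the-Fourier-transform} we have $\widehat{\cF}_k(x)=\eval\big(H^{(-1)}(\cF(x))^k\big)$ for all $k\ge 0$ and all $x\in\cA_{\cF}\cap\cA_{\widehat{\cF}}$, in particular for $x\in S$.

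The first step is an operator reformulation of the binomial theorem: for an umbral map $\cG$ and elements $x,y$ of $F$ with $x+y\in\cA_{\cG}$, the operator identity $\cD_{\cG}(x+y)=\cD_{\cG}(x)\circ\cD_{\cG}(y)$ on $F_b[[T]]$ holds if and only if $\cG_n(x+y)=\sum_{k=0}^{n}\binom{n}{k}\cG_k(x)\cG_{n-k}(y)$ for all $n\ge 0$. The forward direction is Lemma~\ref{Lemma-The-flow-operator-D-F-acts-in-the-same-way-as-the-exponential-operator} together with Remark~\ref{Remark-S-is-closed-under-addition-if-the-umbra-F-satisfies-the-binomial-theorem-wrt-S}; the converse follows by applying both operators to $T^n$, using $\D^{(i)}\circ\D^{(j)}=\binom{i+j}{i}\D^{(i+j)}$, and comparing the coefficients of $T^0$. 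Granting this, fix $x,y\in S$. Then $x+y\in\cA_{\cF}$ by Remark~\ref{Remark-S-is-closed-under-addition-if-the-umbra-F-satisfies-the-binomial-theorem-wrt-S}, and $\cD_{\cF}(x+y)=\cD_{\cF}(x)\circ\cD_{\cF}(y)$ by Lemma~\ref{Lemma-The-flow-operator-D-F-acts-in-the-same-way-as-the-exponential-operator}. Since $x,y\in\cA_{\cF}\cap\cA_{\widehat{\cF}}$, Remark~\ref{Remark-A-dual-flow-is-produced-by-an-inner-automorphism} gives $\cD_{\widehat{\cF}}(x)=\phi\circ\cD_{\cF}(x)\circ\phi^{-1}$, and likewise at $y$, so $\cD_{\widehat{\cF}}(x)\circ\cD_{\widehat{\cF}}(y)=\phi\circ\cD_{\cF}(x+y)\circ\phi^{-1}$. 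Evaluating both sides on $T^n$ and extracting the coefficient of $T^0$ --- on the right, write $\phi^{-1}(T^n)=H^{(-1)}(T)^n$, use Theorem~\ref{Theorem-A-generalization-of-the-Goss-theorem} to get $\cD_{\cF}(x+y)\big(H^{(-1)}(T)^n\big)=\eval\big(H^{(-1)}(T+\cF(x+y))^n\big)$, and note that $\phi$ preserves constant terms; on the left, expand directly --- yields $\sum_{k=0}^{n}\binom{n}{k}\widehat{\cF}_k(x)\widehat{\cF}_{n-k}(y)=\eval\big(H^{(-1)}(\cF(x+y))^n\big)$. Provided $x+y\in\cA_{\widehat{\cF}}$, Theorem~\ref{Theorem-A-flow-under-the-Fourier-transform} identifies the right-hand side with $\widehat{\cF}_n(x+y)$, and the binomial theorem for $\widehat{\cF}$ at $x,y$ follows.

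I expect the only real difficulty to be the final proviso $x+y\in\cA_{\widehat{\cF}}$ needed to invoke Theorem~\ref{Theorem-A-flow-under-the-Fourier-transform} at $x+y$: the hypothesis $S\subseteq\cA_{\cF}\cap\cA_{\widehat{\cF}}$ supplies $x+y\in\cA_{\cF}$ but not manifestly $x+y\in\cA_{\widehat{\cF}}$, and this is automatic exactly when $\cA_{\cF}=\cA_{\widehat{\cF}}$, i.e.\ in the setting of Proposition~\ref{Proposition-The-dual-relation-is-an-equivalence-relation}, which is the natural context for the result. It is worth also giving a variant that stays on the $\cF$-side and makes the role of the additivity of $\phi$ transparent: the hypothesis says that $\cF(x+y)$ and $\cF(x)+\cF(y)^{\flat}$ have the same moments, where $\cF(y)^{\flat}$ is a fresh umbra umbrally representing $\{\cF_k(y)\}_{k\ge 0}$ and one expands $(\cF(x)+\cF(y)^{\flat})^m$ by property $(2)(\mathrm{iii})$ of $\eval$; substituting the bounded power series $H^{(-1)}$ preserves equality of moments, and since $H^{(-1)}$ is additive while $\mathrm{char}\,F=p$ one has $H^{(-1)}(\cF(x)+\cF(y)^{\flat})=H^{(-1)}(\cF(x))+H^{(-1)}(\cF(y)^{\flat})$, so expanding the $n$-th power and applying $\eval(H^{(-1)}(\cF(x))^k)=\widehat{\cF}_k(x)$ and $\eval(H^{(-1)}(\cF(y)^{\flat})^{n-k})=\widehat{\cF}_{n-k}(y)$ recovers the displayed identity. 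This variant additionally uses the elementary fact, obtained by truncating and passing to a limit in the complete non-Archimedean field $F$, that $\eval$ is multiplicative on admissible power series supported on disjoint sets of umbrae.
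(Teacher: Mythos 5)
Your argument follows the same route as the paper's proof in outline: reduce to one implication by the symmetry of $\thicksim$, use Lemma \ref{Lemma-The-flow-operator-D-F-acts-in-the-same-way-as-the-exponential-operator} to convert the binomial hypothesis on $\cF$ into the operator identity $\cD_{\cF}(x+y)=\cD_{\cF}(x)\circ\cD_{\cF}(y)$, and transport that identity through conjugation by $\phi$. The difference is that the paper disposes of the transport step by citing Corollaries 3 and 4 of Goss, whereas you carry it out explicitly by applying $\cD_{\widehat{\cF}}(x)\circ\cD_{\widehat{\cF}}(y)=\phi\circ\cD_{\cF}(x+y)\circ\phi^{-1}$ to $T^n$ and extracting constant terms; your computation is correct (the left side gives $\sum_{k}\binom{n}{k}\widehat{\cF}_k(x)\widehat{\cF}_{n-k}(y)$, the right side gives $\eval\bigl(H^{(-1)}(\cF(x+y))^n\bigr)$ since $H(0)=0$ so $\phi$ preserves constant terms, and $H^{(-1)}(T)^n$ is bounded so Theorem \ref{Theorem-A-generalization-of-the-Goss-theorem} applies at $x+y\in\cA_{\cF}$). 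So your write-up is a self-contained version of the paper's argument rather than a genuinely different one.

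The proviso you flag at the end is the one real issue, and you are right to insist on it: identifying $\eval\bigl(H^{(-1)}(\cF(x+y))^n\bigr)$ with $\widehat{\cF}_n(x+y)$ requires invoking Theorem \ref{Theorem-A-flow-under-the-Fourier-transform} at the point $x+y$, hence requires $x+y\in\cA_{\cF}\cap\cA_{\widehat{\cF}}$, while Remark \ref{Remark-S-is-closed-under-addition-if-the-umbra-F-satisfies-the-binomial-theorem-wrt-S} only supplies $x+y\in\cA_{\cF}$. If $x+y\notin\cA_{\widehat{\cF}}$ then $\cD_{\widehat{\cF}}(x+y)$ is not defined and the duality places no constraint whatsoever on the umbra $\widehat{\cF}(x+y)$, so one can construct dual pairs for which the conclusion fails at such a point. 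Note, however, that this gap is not introduced by your argument: the paper's own proof, in appealing to Goss's corollaries, tacitly assumes that $\cD_{\widehat{\cF}}(x+y)$ exists and is conjugate to $\cD_{\cF}(x+y)$, which is the same unproved assertion. Your proposed remedy --- working in $\cS_{\cF}$, i.e.\ assuming $\cA_{\cF}=\cA_{\widehat{\cF}}$ as in Proposition \ref{Proposition-The-dual-relation-is-an-equivalence-relation}, or more generally assuming $x+y\in\cA_{\widehat{\cF}}$ for $x,y\in S$ --- is the correct fix. The umbra-level variant you sketch at the end is consistent with this but does not circumvent the problem, since $\eval$ relates $H^{(-1)}(\cF(x+y))$ to $\widehat{\cF}(x+y)$ only through the same theorem at the same point.
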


\begin{proof}

We only need to prove the ``only if" part of the theorem since the binary relation ``$\thicksim$" is symmetric. Assume that $\cF$ satisfies the binomial theorem with respect to $S$. It then follows from Lemma \ref{Lemma-The-flow-operator-D-F-acts-in-the-same-way-as-the-exponential-operator} that $\cD_{\cF}(x + y) = \cD_{\cF}(x)\cD_{\cF}(y)$ for any $x , y \in S$. Thus it follows from \cite[Corollary 3 and Corollary 4]{Goss-JAlgebra-146-1992} that $\widehat{\cF}$ satisfies the binomial theorem with respect to $S$.

\end{proof}

\section{Examples, and a question of Goss}
\label{Section-Examples-and-a-question-of-Goss}

In this section, we present some examples of flow maps, and discuss their properties in the language of the classically umbral calculus. These flow maps was already given by Goss \cite{Goss-JAlgebra-146-1992}. Another goal in this section is to formulate and partially answer a generalization of a question of Goss about flow maps. The main result in this section is a first step toward completely understanding the question of Goss. We begin by introducing a special type of umbral maps that plays a key role in this section.

\subsection{Geometric umbral maps.}
\label{Subsection-Geometric-flows}

In this subsection, we assume that $F$ is a complete field under a non-Archimedean norm $|\cdot|_F$ of any characteristic. We introduce the following notion that will play a key role in the rest of this section.

\begin{definition}
\label{Definition-Geometric-flows}

Let $\cF: F \rightarrow \cU$ be an umbral map, and for each $x \in F$, let $\{\cF_k(x)\}_{k \ge 0}$ be the sequence umbrally represented by $\cF(x)$. Let $x$ be an element in $F$. We say that $\cF$ is a \textit{geometric umbral map at $x$}, or equivalently $\cF$ is \textit{geometric at $x$} if $\cF_k(x) = \cF_1(x)^k$ for all $k \ge 0$.

When $\cF$ is geometric at any element $x$ in $\cA_{\cF}$, we simply say that $\cF$ is a \textit{geometric umbral map}, or that $\cF$ is \textit{geometric}.

\end{definition}

For a geometric umbral map $\cF$, the set $\cA_{\cF}$ of $\cF$-admissible elements has a very simple description.
\begin{proposition}
\label{Proposition-The-sets-of-admissible-elements-for-geometric-umbral-maps}

Let $\cF : F \rightarrow \cU$ be a geometric umbral map, and for each $x \in F$, let $\{\cF_k(x)\}_{k \ge 0}$ be the sequence umbrally represented by $\cF(x)$. Then
\begin{align*}
\cA_{\cF} = \{x \in F \; | \; |\cF_1(x)|_F < 1 \}.
\end{align*}

\end{proposition}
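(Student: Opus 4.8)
The plan is to prove the two inclusions $\cA_{\cF} \subseteq \{x \in F \mid |\cF_1(x)|_F < 1\}$ and $\{x \in F \mid |\cF_1(x)|_F < 1\} \subseteq \cA_{\cF}$ separately, using the defining property $\cF_k(x) = \cF_1(x)^k$ of a geometric umbral map together with multiplicativity of the norm $|\cdot|_F$.

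First I would recall that, by definition, $\cA_{\cF} = \{x \in F \mid \cF_k(x) \to 0 \text{ as } k \to \infty\}$, and that for a geometric umbral map $\cF_k(x) = \cF_1(x)^k$, so $|\cF_k(x)|_F = |\cF_1(x)|_F^k$ for every $k \ge 0$. For the inclusion $\cA_{\cF} \subseteq \{|\cF_1(x)|_F < 1\}$: suppose $x \in \cA_{\cF}$. Then $|\cF_1(x)|_F^k \to 0$ as $k \to \infty$. If $|\cF_1(x)|_F \ge 1$, then $|\cF_1(x)|_F^k \ge 1$ for all $k$, contradicting convergence to $0$; hence $|\cF_1(x)|_F < 1$. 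Conversely, if $|\cF_1(x)|_F < 1$, then $|\cF_k(x)|_F = |\cF_1(x)|_F^k \to 0$, so $\cF_k(x) \to 0$ and $x \in \cA_{\cF}$. This establishes both inclusions and hence the equality.

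There is no real obstacle here: the argument is an elementary consequence of the geometric relation $\cF_k(x) = \cF_1(x)^k$ and the fact that a real number $r = |\cF_1(x)|_F \ge 0$ satisfies $r^k \to 0$ if and only if $r < 1$. The only point worth a sentence of care is the $k = 0$ case (where $\cF_0(x) = 1$ automatically, since $\eval(\cF(x)^0) = \eval(1) = 1$), which is consistent with $\cF_1(x)^0 = 1$ and plays no role in the limit as $k \to \infty$. I would write the proof as a short paragraph rather than a displayed computation, invoking only Definition~\ref{Definition-Geometric-flows} and the definition of $\cA_{\cF}$.
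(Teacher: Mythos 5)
Your proof is correct and takes essentially the same route as the paper's, which simply observes that $x \in \cA_{\cF}$ iff $|\cF_k(x)|_F = |\cF_1(x)|_F^k \to 0$ iff $|\cF_1(x)|_F < 1$; your version just spells out the two inclusions. The one caveat (shared by the paper's own proof) is that Definition \ref{Definition-Geometric-flows} only declares $\cF$ geometric at points of $\cA_{\cF}$, so the reverse inclusion tacitly assumes the relation $\cF_k(x) = \cF_1(x)^k$ also holds at any $x$ with $|\cF_1(x)|_F < 1$ not yet known to lie in $\cA_{\cF}$.
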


\begin{proof}

We see that $x \in \cA_{\cF}$ if and only if $\lim_{k \rightarrow \infty}|\cF_k(x)| = 0$. Since $\cF_k(x) = \cF_1(x)^k$ for all $k \ge 0$, the last condition is equivalent to saying that $|\cF_1(x)|_F < 1$, and hence Proposition \ref{Proposition-The-sets-of-admissible-elements-for-geometric-umbral-maps} follows.

\end{proof}

In the next two subsections, we will give some examples of geometric flow maps that are of great interest in this paper.

Let $\Gamma : F \rightarrow F$ be a function defined over $F$ and taking values in $F$. For each $x \in F$ with $|\Gamma(x)|_F < 1$, we denote by $e^{\Gamma(x)\D}$ the linear operator defined by
\begin{align*}
e^{\Gamma(x)\D} := \sum_{k \ge 0}\Gamma(x)^k \D^{(k)}.
\end{align*}

The operator $e^{\Gamma(x)\D}$ is well-defined if $x$ is an element in $F$ such that $|\Gamma(x)|_F < 1$. We can prove this fact by relating the above operator to a flow map. Indeed, let $\cF : F \rightarrow \cU$ be an umbral map such that $\eval\left(\cF(x)^k\right) = \Gamma(x)^k$ for any $x \in F$ and all $k \ge 0$. In other words, for each $x \in F$, the sequence $\{\cF_k(x)\}_{k \ge 0}$ is umbrally represented by $\cF(x)$, where $\cF_k(x) = \Gamma(x)^k$ for all $k \ge 0$. It is easy to see that $\cF$ is a geometric umbral map, and that any geometric umbral map can be constructed in the same way as $\cF$. When a geometric umbral map $\cF$ arises out of a function $\Gamma$ defined over $F$ and taking values in $F$ in the same way as presented above, we say that $\cF$ is the \textit{associated geometric umbral map of $\Gamma$}.

By Definition \ref{Definition-The-flow-operator}, we see that
\begin{align*}
\cD_{\cF}(x) = \sum_{k \ge 0}\cF_k(x)\D^{(k)} = \sum_{k \ge 0}\Gamma(x)^k\D^{(k)} = e^{\Gamma(x)\D}
\end{align*}
for all $x \in \cA_{\cF}$. By Proposition \ref{Proposition-The-sets-of-admissible-elements-for-geometric-umbral-maps} and since $\cF$ is geometric, we see that \begin{align*}
\cA_{\cF} = \{x \in F \; | \; |\Gamma(x)|_F < 1\}.
\end{align*}

The following result gives a simple description of the flow map of a geometric umbral map.

\begin{proposition}
\label{Proposition-Flow-maps-of-geometric-umbral-maps}

Let $\Gamma$ be a function defined over $F$ and taking values in $F$. Let $\cF : F \rightarrow \cU$ be the associated geometric umbral map of $\Gamma$. Then
\begin{align*}
e^{\Gamma(x)\D}P(T) = \cD_{\cF}(x)P(T) = P(T + \Gamma(x))
\end{align*}
for all $x \in F$ with $|\Gamma(x)|_F < 1$ and all $P(T) \in F_b[[T]]$.

\end{proposition}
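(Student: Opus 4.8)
The plan is to deduce both equalities from results already established, essentially by unwinding definitions. Recall from the discussion preceding the statement that, since $\cF$ is the associated geometric umbral map of $\Gamma$, the sequence umbrally represented by $\cF(x)$ is given by $\cF_k(x) = \Gamma(x)^k$ for all $k \ge 0$, and that Proposition \ref{Proposition-The-sets-of-admissible-elements-for-geometric-umbral-maps} yields $\cA_{\cF} = \{x \in F \mid |\Gamma(x)|_F < 1\}$. Hence, for any $x \in F$ with $|\Gamma(x)|_F < 1$, one has $x \in \cA_{\cF}$, so $\cD_{\cF}(x)$ is defined on $F_b[[T]]$ by Corollary \ref{Corollary-A-flow-is-an-action-on-the-algebra-of-bounded-power-series}, and by Definition \ref{Definition-The-flow-operator},
\[
\cD_{\cF}(x) = \sum_{k \ge 0}\cF_k(x)\D^{(k)} = \sum_{k \ge 0}\Gamma(x)^k\D^{(k)} = e^{\Gamma(x)\D},
\]
which is the first equality.

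For the second equality, I would apply Theorem \ref{Theorem-A-generalization-of-the-Goss-theorem} to the umbral map $\cF$: for $P(T) = \sum_{k \ge 0} a_k T^k \in F_b[[T]]$,
\[
\cD_{\cF}(x)P(T) = \eval\big(P(T + \cF(x))\big) = \sum_{k = 0}^{\infty} a_k\left(\sum_{m = 0}^k \binom{k}{m}\cF_m(x)T^{k - m}\right) = \sum_{k = 0}^{\infty} a_k\left(\sum_{m = 0}^k \binom{k}{m}\Gamma(x)^m T^{k - m}\right),
\]
using $\cF_m(x) = \Gamma(x)^m$. It then remains to recognize the last expression as $P(T + \Gamma(x))$. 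This is a purely formal reindexing: the coefficient of $T^n$ in $\sum_{k}a_k(T+\Gamma(x))^k$ equals $\sum_{k \ge n}\binom{k}{n}a_k\Gamma(x)^{k-n}$, a series that converges in $F$ because $|a_k|_F$ is bounded, $|\binom{k}{n}|_F \le 1$, and $|\Gamma(x)|_F < 1$; thus $P(T+\Gamma(x))$ is a well-defined element of $F[[T]]$ and agrees term-by-term with the displayed double sum. Equivalently, one may simply note that under $\eval$ the umbra $\cF(x)$ is sent to the scalar $\Gamma(x)$ in each coefficient of $T^n$, literally substituting $\Gamma(x)$ for $\cF(x)$ in $P(T + \cF(x))$.

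I do not expect a serious obstacle here; the only point requiring a word of care is the convergence/reindexing in the last step — checking that the formal substitution $T \mapsto T + \Gamma(x)$ makes sense coefficientwise when $|\Gamma(x)|_F < 1$ and $P$ is bounded — but this is exactly the estimate already carried out in Proposition \ref{Proposition-A-condition-for-a-bounded-power-series-to-be-admissible} and Corollary \ref{Corollary-The-image-of-the-ring-of-bounded-power-series-under-the-exponential-operator}, so it can be invoked with essentially no additional argument.
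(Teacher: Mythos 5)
Your proposal is correct and follows essentially the same route as the paper: both obtain the first equality from the definition of $\cD_{\cF}(x)$ via the discussion preceding the proposition, and the second by applying Theorem \ref{Theorem-A-generalization-of-the-Goss-theorem} and substituting $\cF_m(x) = \Gamma(x)^m$ to recognize the result as $P(T+\Gamma(x))$. Your extra remark on the coefficientwise convergence of the reindexed sum is a harmless elaboration of what the paper leaves implicit.
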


\begin{proof}

Let $P(T) = \sum_{k \ge 0}a_k T^k \in F_b[[T]]$, and let $x \in F$ such that $|\Gamma(x)|_F < 1$. By the discussion preceding Proposition \ref{Proposition-Flow-maps-of-geometric-umbral-maps} and Theorem \ref{Theorem-A-generalization-of-the-Goss-theorem}, we see that
\begin{align*}
e^{\Gamma(x)\D}P(T) = \cD_{\cF}(x)P(T) = \sum_{k \ge 0}a_k \left(\sum_{m = 0}^k\binom{k}{m} \cF_{m}(x) T^{k - m}\right).
\end{align*}
Since $\cF$ is the associated geometric flow map of $\Gamma$, we deduce that
\begin{align*}
\sum_{k \ge 0}a_k \left(\sum_{m = 0}^k\binom{k}{m} \cF_{m}(x) T^{k - m}\right) &= \sum_{k \ge 0}a_k \left(\sum_{m = 0}^k\binom{k}{m} \Gamma(x)^m T^{k - m}\right) \\
&=  \sum_{k \ge 0}a_k (T + \Gamma(x))^k \\
&= P(T + \Gamma(x)),
\end{align*}
and hence Proposition \ref{Proposition-Flow-maps-of-geometric-umbral-maps} follows immediately.

\end{proof}

\begin{remark}
\label{Remark-Flow-maps-of-geometric-umbral-maps}

By the above proposition, we see that $\cD_{\cF}(x) = e^{\Gamma(x)\D}$ for all $x \in F$ with $|\Gamma(x)|_F < 1$. Hence by Corollary \ref{Corollary-A-flow-is-an-action-on-the-algebra-of-bounded-power-series}, we see that $e^{\Gamma(x)\D}$ is defined over $F_b[[T]]$ and taking values in $F_b[[T]]$ for all $x \in F$ with $|\Gamma(x)|_F < 1$.

\end{remark}

\subsection{The additive umbral map.}
\label{Subsection-The-additive-flow}

The following example was given by Goss in \cite[Example 1]{Goss-JAlgebra-146-1992}. In this example, we let the complete field $F$ as in Subsection \ref{Subsection-Geometric-flows}. Let $\cS : F \rightarrow \cU$ be an umbral map such that for each $x \in F$, the sequence $\{\cS_k(x)\}_{k \ge 0}$ umbrally represented by $\cS(x)$ is defined by
\begin{align*}
\cS_k(x) = x^k
\end{align*}
for all $k \ge 0$. The umbral map $\cS$ is called the \textit{additive umbral map}, and the corresponding flow map $\cD_{\cS}$ of $\cS$ is called the \textit{additive flow map}.

By the definition of $\cS$, we see that $\cS$ is a geometric umbral map, and hence by Proposition \ref{Proposition-The-sets-of-admissible-elements-for-geometric-umbral-maps}, we deduce that $\cA_{\cS} := \{x \in F \; | \; |x|_F < 1\}$. By the classical binomial theorem, we deduce immediately that $\cS$ satisfies the binomial theorem with respect to $\cA_F$.

For the rest of this section, the symbol $\cS$ always denotes the additive flow map.

\subsection{The naive umbral map.}
\label{Subsection-The-naive-flow}

In this subsection, we let $A := \bF_q[t]$, $k := \bF_q(t)$, and let $F := \bF_q((1/t))$ be the completion of $k$, where $q$ is a power of a prime $p$. In what follows, we recall the naive flow map that was first introduced by Goss \cite{Goss-JAlgebra-146-1992}. Let $C$ be the Carlitz module, and let $e_C(x)$ be the exponential of $C$ (see \cite{Goss}). Recall from \cite{Goss} that
\begin{align*}
e_C(x) = \sum_{k \ge 0}\dfrac{x^{q^k}}{D_k},
\end{align*}
where $D_k$ is the product of all monic polynomials in $A$ of degree $k$ for each $k \ge 0$. Let $\cN : F \rightarrow \cU$ be the associated geometric umbral map of $e_C(x)$. The umbral map $\cN$ is called the \textit{naive umbral map}, and the corresponding flow map $\cD_{\cN}$ of $\cN$ is called the \textit{naive flow map}. Note that $|e_C(x)|_F < 1$ when $|x|_F < 1$. The following result follows immediately from Proposition \ref{Proposition-Flow-maps-of-geometric-umbral-maps}.

\begin{proposition}
\label{Proposition-The-naive-flow-map}

For all $x \in F$ with $|x|_F < 1$, the naive flow map satisfies the equation
\begin{align*}
\cD_{\cN}(x)P(T) = e^{e_C(x)\D}P(T) = P(T + e_C(x))
\end{align*}
for all bounded power series $P(T) \in F_b[[T]]$.

\end{proposition}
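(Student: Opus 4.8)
The statement to prove is Proposition~\ref{Proposition-The-naive-flow-map}, which asserts that for the naive umbral map $\cN$, the flow map satisfies $\cD_{\cN}(x)P(T) = e^{e_C(x)\D}P(T) = P(T + e_C(x))$ for all $x \in F$ with $|x|_F < 1$ and all $P(T) \in F_b[[T]]$. As the excerpt itself indicates just before the statement, this should follow immediately from Proposition~\ref{Proposition-Flow-maps-of-geometric-umbral-maps} once the hypotheses are verified.

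\begin{proof}

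The plan is to invoke Proposition~\ref{Proposition-Flow-maps-of-geometric-umbral-maps} with $\Gamma = e_C$. By construction, $\cN$ is the associated geometric umbral map of the Carlitz exponential $e_C(x)$; that is, for each $x \in F$ the sequence $\{\cN_k(x)\}_{k \ge 0}$ umbrally represented by $\cN(x)$ satisfies $\cN_k(x) = e_C(x)^k$ for all $k \ge 0$. Thus the only thing to check in order to apply Proposition~\ref{Proposition-Flow-maps-of-geometric-umbral-maps} is that $|e_C(x)|_F < 1$ whenever $|x|_F < 1$, so that $x \in \cA_{\cN}$ by Proposition~\ref{Proposition-The-sets-of-admissible-elements-for-geometric-umbral-maps}. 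This is the one genuine (though standard) computation; the rest is formal.

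To verify the norm estimate, write $e_C(x) = \sum_{k \ge 0} x^{q^k}/D_k$. Recall that $D_k$ is the product of all monic polynomials in $A = \bF_q[t]$ of degree $k$, so $D_k \in A$ and in particular $|D_k|_F \ge 1$ for all $k \ge 0$ (nonzero polynomials have norm at least $1$ under the normalized valuation at $1/t$). Hence for $|x|_F < 1$ we have $|x^{q^k}/D_k|_F \le |x^{q^k}|_F = |x|_F^{q^k} \le |x|_F < 1$ for every $k \ge 0$, and since $|x|_F^{q^k} \to 0$ as $k \to \infty$ (because $|x|_F < 1$), the series converges in $F$. By the ultrametric inequality, $|e_C(x)|_F \le \max_{k \ge 0} |x|_F^{q^k} = |x|_F < 1$. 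Therefore $x \in \cA_{\cN}$.

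With this in hand, Proposition~\ref{Proposition-Flow-maps-of-geometric-umbral-maps} applied to the function $\Gamma = e_C$ and its associated geometric umbral map $\cN$ yields $e^{e_C(x)\D}P(T) = \cD_{\cN}(x)P(T) = P(T + e_C(x))$ for all $x \in F$ with $|e_C(x)|_F < 1$ — in particular for all $x$ with $|x|_F < 1$ by the previous paragraph — and all bounded power series $P(T) \in F_b[[T]]$. This is exactly the asserted identity, so the proof is complete.

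\end{proof}

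Since every step here is either a direct citation of an already-established result or the elementary ultrametric bound $|D_k|_F \ge 1$, there is no serious obstacle; the main (minor) point to get right is the direction of the valuation inequality for $D_k$ and the observation that $q^k \ge 1$ makes $|x|_F^{q^k} \le |x|_F$.
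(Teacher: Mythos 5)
Your proposal is correct and follows exactly the route the paper takes: the paper simply notes that $|e_C(x)|_F < 1$ when $|x|_F < 1$ and declares the result immediate from Proposition~\ref{Proposition-Flow-maps-of-geometric-umbral-maps}, which is precisely your argument. The only difference is that you spell out the standard ultrametric estimate $|e_C(x)|_F \le \max_k |x|_F^{q^k}/|D_k|_F \le |x|_F$ (using $|D_k|_F \ge 1$), which the paper leaves implicit.
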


For the rest of this section, we always denote by $\cN$ the naive umbral map.

\subsection{The twisted flow map.}
\label{Subsection-The-twisted-flow}

We let the polynomial ring $A$, the function field $k$ and the complete field $F$ as in Subsection \ref{Subsection-The-naive-flow}. For each $k \ge 0$, we define
\begin{align*}
e_k(x) = \prod_{\substack{\epsilon \in A, \; \mathrm{deg}(\epsilon) < k}}(x + \epsilon).
\end{align*}
For each integer $n \ge 0$, write $n$ $q$-adically as $n = \sum_{k = 0}^{h}\epsilon_k q^k$ with $0 \le \epsilon_k < q$, and define
\begin{align*}
\cT_n(x) := \prod_{k = 0}^{h}\left(\dfrac{e_k(x)}{D_k}\right)^{\epsilon_k},
\end{align*}
where the $D_k$ are the same as in Subsection \ref{Subsection-The-naive-flow}. Let $\cT : F \rightarrow \cU$ be the umbral map such that for each $x \in F$, the sequence $\{\cT_n(x)\}_{n \ge 0}$ is umbrally represented by $\cT(x)$. We call $\cT$ the \textit{twisted umbral map}, and the corresponding flow map $\cD_{\cT}$ the \textit{twisted flow map}. The twisted flow map was first introduced by Goss (see \cite[Example 1]{Goss-JAlgebra-146-1992}). As was shown in \cite{Goss-KTheory-2-1989}, it is well-known that $\cA_{\cT} = F$.

As was pointed out by Goss \cite{Goss-JAlgebra-146-1992}, Carlitz proved that the twisted umbral map $\cT$ satisfies the binomial theorem with respect to $F$.

Note further that $\cT$ is not geometric at any $x \in A^{\times}$. Indeed assume the contrary, that is, $\cT$ is geometric at some element $x \in A^{\times}$. Then it follows that $\cT_n(x) = \cT_1(x)^n = x^n$ for all $n \ge 0$. But $T_n(x) = 0$ for a sufficiently large integer $n$, which is a contradiction. Therefore $\cT$ is not geometric at any nonzero element $x \in A^{\times}$.

\subsection{A question of Goss}
\label{Subsection-A-question-of-Goss}

In this subsection, we give a partial answer to an old question of Goss \cite{Goss-JAlgebra-146-1992}. Throughout this subsection, $F$ is a complete field under a non-Archimedean norm $|\cdot|_F$ of characteristic $p > 0$. We begin by recalling Goss's question.

\begin{question}
\label{Question-A-question-of-Goss}
$(\text{Goss})$

Let $\cF : F \rightarrow \cU$, $\widehat{\cF} : F \rightarrow \cU$ be umbral maps such that the flow map $\cD_{\cF}$ is dual to the flow map $\cD_{\widehat{\cF}}$. Let $\phi : F[[T]] \rightarrow F_b[[T]]$ be an additive isomorphism such that the diagram $(\ref{Definition-The-dual-of-a-flow})$ commutes, that is, $\cD_{\widehat{\cF}}(x) = \phi \circ \cD_{\cF} \circ \phi^{-1}$ for all $x \in \cA_{\cF} \cap \cA_{\widehat{\cF}}$. By Theorem \ref{Theorem-A-flow-under-the-Fourier-transform}, we know that
\begin{align}
\label{Equation-The-equation-between-two-flow-maps-in-the-question-of-Goss}
\widehat{\cF}_k(x) = \eval\left(H^{(-1)}(\cF(x))^k\right)
\end{align}
for all $k \ge 0$ and all $x \in \cA_{\cF} \cap \cA_{\widehat{\cF}}$, where $H^{(-1)}(T)$ is the composition inverse of the additive bounded power series $H(T) := \phi(T)$. Let $\cA_{\widehat{\cF}_1}$ denote the set of all element $x \in F$ such that $|\widehat{\cF}_1(x)| < 1$. Goss \cite{Goss-JAlgebra-146-1992} asked what the exact relationship between the flow $e^{\widehat{\cF}_1(x)\D}$ and the flow $\cD_{\widehat{\cF}}(x)$ for each $x \in \cA_{\cF} \cap \cA_{\widehat{\cF}} \cap \cA_{\widehat{\cF}_1}$ is. (Note that since $|\widehat{\cF}_1(x)| < 1$ for each $x \in \cA_{\cF} \cap \cA_{\widehat{\cF}} \cap \cA_{\widehat{\cF}_1}$, the operator $e^{\widehat{\cF}_1(x)\D}$ is well-defined.)

\end{question}

The rest of this section is to prove a necessary and sufficient condition for which the two flows in Question \ref{Question-A-question-of-Goss} are equal to each other. It turns out that the flow $e^{\widehat{\cF}_1(x)\D}$ is equal to the flow $\cD_{\widehat{\cF}}(x)$ for each $x \in \cA_{\cF} \cap \cA_{\widehat{\cF}} \cap \cA_{\widehat{\cF}_1}$ if and only if $\cF$ is a geometric umbral map at any element $x \in \cA_{\cF} \cap \cA_{\widehat{\cF}} \cap \cA_{\widehat{\cF}_1}$. This result is a first step toward fully understanding the question of Goss. It would be very interesting to know what the exact relationship between the two flows is when $\cF$ is not a geometric umbral map. We now prove the main result in this section.

\begin{theorem}
\label{Theorem-A-theorem-about-Goss-question}

We maintain the same notation and assumptions as in Question \ref{Question-A-question-of-Goss}. Let $x$ be an element in $\cA_{\cF} \cap \cA_{\widehat{\cF}} \cap \cA_{\widehat{\cF}_1}$. Then the flow $e^{\widehat{\cF}_1(x)\D}$ is equal to the flow $\cD_{\widehat{\cF}}(x)$ if and only if the umbral map $\cF$ is geometric at $x$.

\end{theorem}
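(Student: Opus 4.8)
The plan is to reduce the theorem to an equivalence between the two umbral maps and then to feed that into Theorem~\ref{Theorem-A-flow-under-the-Fourier-transform}. I would first note that $e^{\widehat{\cF}_1(x)\D}=\sum_{k\ge 0}\widehat{\cF}_1(x)^k\D^{(k)}$ is well defined on $F_b[[T]]$ because $x\in\cA_{\widehat{\cF}_1}$, while $\cD_{\widehat{\cF}}(x)=\sum_{k\ge 0}\widehat{\cF}_k(x)\D^{(k)}$ is well defined on $F_b[[T]]$ by Corollary~\ref{Corollary-A-flow-is-an-action-on-the-algebra-of-bounded-power-series}; both are operators of the form $\sum_{k\ge 0}c_k\D^{(k)}$, and such an operator is determined by its coefficients, since applying it to $T^n\in F_b[[T]]$ and extracting the coefficient of $T^0$ returns $c_n$. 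Hence $e^{\widehat{\cF}_1(x)\D}=\cD_{\widehat{\cF}}(x)$ holds exactly when $\widehat{\cF}_k(x)=\widehat{\cF}_1(x)^k$ for all $k\ge 0$, i.e.\ exactly when $\widehat{\cF}$ is geometric at $x$. So it is enough to show that $\widehat{\cF}$ is geometric at $x$ if and only if $\cF$ is.

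\emph{An evaluation lemma.} Next I would record the elementary fact that if $\alpha$ is an umbra with $\eval(\alpha^m)=c^m$ for all $m$ and $|c|_F<1$, and $Q(T)=\sum_m a_mT^m\in F_b[[T]]$, then $Q(\alpha)=\sum_m a_m\alpha^m$ is admissible (as $|a_m\eval(\alpha^m)|_F=|a_m|_F|c|_F^m\to 0$) and
\begin{align*}
\eval(Q(\alpha))=\sum_{m\ge 0}a_m\eval(\alpha^m)=\sum_{m\ge 0}a_mc^m=Q(c)
\end{align*}
by linearity of $\eval$ and completeness of $F$. Since $F_b[[T]]$ is an algebra, this applies in particular to $Q=R^k$ for any bounded $R$ and any $k\ge 0$, giving $\eval(R(\alpha)^k)=R(c)^k$.

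\emph{The equivalence.} For the ``only if'' direction I would assume $\cF$ geometric at $x$, put $c=\cF_1(x)$ (so $|c|_F<1$, since $\cF_k(x)=c^k\to 0$ because $x\in\cA_{\cF}$), and use Theorem~\ref{Theorem-A-flow-under-the-Fourier-transform} to write $\widehat{\cF}_k(x)=\eval\big(H^{(-1)}(\cF(x))^k\big)$, where $H(T)=\phi(T)$ is the generator of $\phi$ and $H^{(-1)}$ is its bounded composition inverse. The evaluation lemma with $\alpha=\cF(x)$, $R=H^{(-1)}$ then gives $\widehat{\cF}_k(x)=H^{(-1)}(\cF_1(x))^k$ for all $k$; the case $k=1$ identifies $\widehat{\cF}_1(x)=H^{(-1)}(\cF_1(x))$, so $\widehat{\cF}_k(x)=\widehat{\cF}_1(x)^k$ and $\widehat{\cF}$ is geometric at $x$. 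For the converse, the commuting square witnessing $\cD_{\cF}\thicksim\cD_{\widehat{\cF}}$ via $\phi$ is equivalent to the one witnessing $\cD_{\widehat{\cF}}\thicksim\cD_{\cF}$ via $\phi^{-1}$, whose generator $H^{(-1)}$ has composition inverse $H$; rerunning the proof of Theorem~\ref{Theorem-A-flow-under-the-Fourier-transform} with the two maps exchanged yields $\cF_k(x)=\eval\big(H(\widehat{\cF}(x))^k\big)$, and then the same computation as above (now with $d=\widehat{\cF}_1(x)$, which satisfies $|d|_F<1$ because $x\in\cA_{\widehat{\cF}}$, and $R=H$) shows $\cF_k(x)=H(\widehat{\cF}_1(x))^k=\cF_1(x)^k$, i.e.\ $\cF$ is geometric at $x$.

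\emph{Expected difficulty.} With Theorem~\ref{Theorem-A-flow-under-the-Fourier-transform} available the argument is essentially formal, so I do not anticipate a serious obstacle; the points needing attention are the coefficient comparison in the reduction (operators $\sum_k c_k\D^{(k)}$ on $F_b[[T]]$ are determined by their coefficients) and the extraction of the symmetric identity $\cF_k(x)=\eval(H(\widehat{\cF}(x))^k)$ from the symmetry of ``$\thicksim$'', together with the routine checks that $H^{(-1)}$ is bounded, that powers of bounded power series stay bounded, and that $|c|_F<1$ and $|d|_F<1$ so that the series in the evaluation lemma converge.
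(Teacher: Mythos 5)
Your proposal is correct, and the direction ``$\cF$ geometric $\Rightarrow$ the flows agree'' is exactly the paper's argument: your evaluation lemma and its extension to powers are the paper's Lemma \ref{Lemma-The-evaluation-of-a-bounded-power-series-at-a-geometric-flow} and Lemma \ref{Lemma-The-evaluation-of-powers-of-a-bounded-power-series-at-a-geometric-flow}. The other direction is organized differently. You reduce the operator identity to ``$\widehat{\cF}$ is geometric at $x$'' by comparing coefficients of $\D^{(k)}$ (a worthwhile explicit step; the paper performs this comparison silently), and then deduce that $\cF$ is geometric by invoking the symmetry of $\thicksim$ and rerunning Theorem \ref{Theorem-A-flow-under-the-Fourier-transform} with $\phi^{-1}$ to get $\cF_k(x)=\eval\left(H(\widehat{\cF}(x))^k\right)$. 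The paper never reverses the duality: it sets $\cQ_1(x)=H^{(-1)}(\cF(x))$, reads off $\eval(\cQ_1(x)^k)=\widehat{\cF}_1(x)^k$ directly from the operator identity, expands $\cF(x)^n=H(\cQ_1(x))^n=\sum_{k}h_k\cQ_1(x)^k$ where $H(T)^n=\sum_k h_kT^k$, and evaluates termwise to obtain $\cF_n(x)=H(\widehat{\cF}_1(x))^n=\cF_1(x)^n$. Your route is conceptually cleaner (the flows agree iff $\widehat{\cF}$ is geometric iff $\cF$ is geometric) but leans on $\phi^{-1}$ being an additive isomorphism, in particular on $H^{(-1)}$ being bounded; the paper uses that fact as well, so nothing is lost. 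One small slip to fix: $|\widehat{\cF}_1(x)|_F<1$ should be drawn from $x\in\cA_{\widehat{\cF}_1}$ (or from $x\in\cA_{\widehat{\cF}}$ combined with the geometricity of $\widehat{\cF}$ at $x$ that you have just assumed), not from $x\in\cA_{\widehat{\cF}}$ alone.
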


We first prove some lemmas that we will need in the proof of Theorem \ref{Theorem-A-theorem-about-Goss-question}. Using the same arguments as in the proof of Proposition \ref{Proposition-A-condition-for-a-bounded-power-series-to-be-admissible}, the following lemma is immediate.

\begin{lemma}
\label{Lemma-The-evaluation-of-a-bounded-power-series-at-an-umbra-is-admissible}

Let $P(T)$ be a bounded power series in $F_b[[T]]$. Let $\alpha$ be an umbra such that the sequence $\{u_k\}_{k \ge 0}$ umbrally represented by $\alpha$ satisfies $\lim_{k \rightarrow \infty}u_k = 0$. Then $P(\alpha) \in F_b[[T]][[\alpha]]$ is an admissible power series.

\end{lemma}

\begin{lemma}
\label{Lemma-The-evaluation-of-a-bounded-power-series-at-a-geometric-flow}

Let $\cF : F \rightarrow \cU$ be an umbra map, and let $P(T)$ be a bounded power series in $F_b[[T]]$. Let $x$ be an element in $\cA_{\cF}$ such that $\cF$ is geometric at $x$. Then $\eval(P(\cF(x)) = P(\cF_1(x)).$

\end{lemma}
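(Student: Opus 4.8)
The plan is to compute $\eval(P(\cF(x)))$ directly from the definition of the evaluation of an admissible power series, using the geometric hypothesis $\cF_k(x) = \cF_1(x)^k$ to collapse the umbral evaluation into an ordinary substitution. First I would invoke Lemma~\ref{Lemma-The-evaluation-of-a-bounded-power-series-at-an-umbra-is-admissible} (applicable since $x \in \cA_{\cF}$ means the sequence $\{\cF_k(x)\}_{k \ge 0}$ tends to $0$) to guarantee that $P(\cF(x)) \in F_b[[T]][[\cF(x)]]$ is an admissible power series, so that $\eval(P(\cF(x)))$ is well-defined as an element of $F$ (here $P(\cF(x))$ has no $T$ in it, so the evaluation lands in $F$).

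Next I would write $P(T) = \sum_{k \ge 0} a_k T^k$ with $|a_k|_F < c$ for some constant $c$, so that $P(\cF(x)) = \sum_{k \ge 0} a_k \cF(x)^k$ as an element of $F[[\cF(x)]]$. Applying the (extended) linear functional $\eval$ and using property $(2)(ii)$ together with the admissibility just established, we get
\begin{align*}
\eval(P(\cF(x))) = \sum_{k \ge 0} a_k \eval(\cF(x)^k) = \sum_{k \ge 0} a_k \cF_k(x).
\end{align*}
Now the geometric hypothesis at $x$ gives $\cF_k(x) = \cF_1(x)^k$ for all $k \ge 0$, so the right-hand side becomes $\sum_{k \ge 0} a_k \cF_1(x)^k$. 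Since $x \in \cA_{\cF}$ and $\cF$ is geometric at $x$, Proposition~\ref{Proposition-The-sets-of-admissible-elements-for-geometric-umbral-maps} (or directly the convergence $\cF_1(x)^k \to 0$) shows $|\cF_1(x)|_F < 1$, so $P$ converges at $\cF_1(x)$ and $\sum_{k \ge 0} a_k \cF_1(x)^k = P(\cF_1(x))$ as an element of $F$. Combining these gives $\eval(P(\cF(x))) = P(\cF_1(x))$.

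I do not anticipate a serious obstacle here; the proof is essentially bookkeeping. The one point requiring a little care is making sure the interchange of $\eval$ with the infinite sum is justified — but this is exactly what admissibility (Definition~\ref{Definition-Admissible-power-series}) encodes, and the relevant admissibility was already secured by Lemma~\ref{Lemma-The-evaluation-of-a-bounded-power-series-at-an-umbra-is-admissible}, whose proof mirrors that of Proposition~\ref{Proposition-A-condition-for-a-bounded-power-series-to-be-admissible}. The remaining step, recognizing $\sum a_k \cF_1(x)^k$ as the honest value $P(\cF_1(x))$, is immediate once one observes $|\cF_1(x)|_F < 1$, which follows from $x \in \cA_{\cF}$ and geometricity at $x$.
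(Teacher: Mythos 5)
Your proof is correct and follows essentially the same route as the paper: secure admissibility of $P(\cF(x))$ via Lemma \ref{Lemma-The-evaluation-of-a-bounded-power-series-at-an-umbra-is-admissible}, push $\eval$ through the sum to get $\sum_{k} a_k \cF_k(x)$, and use geometricity to identify this with $P(\cF_1(x))$. Your extra remark that $|\cF_1(x)|_F < 1$ guarantees convergence of $P$ at $\cF_1(x)$ is a detail the paper leaves implicit, but it is not a change of method.
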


\begin{proof}

Write
\begin{align*}
P(T) = \sum_{k \ge 0}a_kT^k,
\end{align*}
where the $a_k$ are elements in $F$. Since $P(T)$ is bounded and $x$ belongs to $\cA_{\cF}$, it follows from Lemma \ref{Lemma-The-evaluation-of-a-bounded-power-series-at-an-umbra-is-admissible} that $P(\cF(x))$ is an admissible power series, and hence $\eval(P(\cF(x))$ is well-defined as an element in $F$. We see that
\begin{align*}
P(\cF(x)) = \sum_{k \ge 0}a_k\cF(x)^k,
\end{align*}
and hence
\begin{align*}
\eval(P(\cF(x))) = \sum_{k \ge 0}a_k\eval\left(\cF(x)^k\right) = \sum_{k \ge 0}a_k\cF_k(x).
\end{align*}
Since $\cF$ is geometric at $x$, we know that $\cF_k(x) = \cF_1(x)^k$ for all $k \ge 0$. Thus it follows from the above equation that
\begin{align*}
\eval(P(\cF(x))) = \sum_{k \ge 0}a_k\cF_1(x)^k = P(\cF_1(x)),
\end{align*}
which proves our contention.

\end{proof}

\begin{lemma}
\label{Lemma-The-evaluation-of-powers-of-a-bounded-power-series-at-a-geometric-flow}

Let $\cF : F \rightarrow \cU$ be an umbra map, and let $P(T)$ be a bounded power series in $F_b[[T]]$. Let $x$ be an element in $\cA_{\cF}$ such that $\cF$ is geometric at $x$. Then
\begin{align*}
\eval(P(\cF(x))^k) = \eval(P(\cF(x)))^k
\end{align*}
for all $k \ge 0$.

\end{lemma}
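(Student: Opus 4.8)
The plan is to reduce the statement about powers $\eval(P(\cF(x))^k)$ to the single-power case already established in Lemma~\ref{Lemma-The-evaluation-of-a-bounded-power-series-at-a-geometric-flow}. The key observation is that if $P(T) \in F_b[[T]]$, then $P(T)^k \in F_b[[T]]$ as well, since $F_b[[T]]$ is a subalgebra of $F[[T]]$. So $P(T)^k$ is itself a bounded power series, and Lemma~\ref{Lemma-The-evaluation-of-a-bounded-power-series-at-a-geometric-flow} applies to it directly.

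First I would set $Q(T) := P(T)^k$, observe $Q(T) \in F_b[[T]]$, and note that since $x \in \cA_{\cF}$ and $\cF$ is geometric at $x$, Lemma~\ref{Lemma-The-evaluation-of-powers-of-a-bounded-power-series-at-a-geometric-flow}'s hypotheses give, via Lemma~\ref{Lemma-The-evaluation-of-a-bounded-power-series-at-a-geometric-flow} applied to $Q$, that $\eval(Q(\cF(x))) = Q(\cF_1(x))$. Since $Q(\cF(x)) = (P(\cF(x)))^k$ as elements of the admissible power series algebra (the evaluation map being a ring homomorphism on admissible series is not even needed here — it is just a literal polynomial/power-series identity in the variable $\cF(x)$), this reads $\eval(P(\cF(x))^k) = P(\cF_1(x))^k$. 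Then apply Lemma~\ref{Lemma-The-evaluation-of-a-bounded-power-series-at-a-geometric-flow} once more to $P$ itself: $\eval(P(\cF(x))) = P(\cF_1(x))$, so $P(\cF_1(x))^k = \eval(P(\cF(x)))^k$. Chaining these two equalities yields $\eval(P(\cF(x))^k) = \eval(P(\cF(x)))^k$, which is exactly the claim. The case $k=0$ is trivial since both sides equal $1$ by $\eval(1)=1$.

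The only point requiring a moment of care is the admissibility bookkeeping: one must check that $P(\cF(x))^k$ lies in $F_b[[T]][[\cF(x)]]_{\cA}$ so that $\eval$ of it makes sense, but this is immediate from Lemma~\ref{Lemma-The-evaluation-of-a-bounded-power-series-at-an-umbra-is-admissible} applied to the bounded series $Q(T) = P(T)^k$ together with the hypothesis $x \in \cA_{\cF}$, which guarantees $\cF_k(x) \to 0$. I do not anticipate a genuine obstacle here — the statement is essentially a corollary of the preceding lemma obtained by substituting $P^k$ for $P$, and the multiplicativity of $u \mapsto P(u)$ on powers of a formal variable is purely formal.
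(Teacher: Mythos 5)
Your proposal is correct and follows essentially the same route as the paper: the paper also applies Lemma~\ref{Lemma-The-evaluation-of-a-bounded-power-series-at-a-geometric-flow} to the bounded power series $P(T)^k$ (written there as $(P_k\circ P)(T)$ with $P_k(T)=T^k$, justified by Remark~\ref{Remark-Powers-of-bounded-power-series-is-bounded}) to get $\eval(P(\cF(x))^k)=P(\cF_1(x))^k$, and then identifies this with $\eval(P(\cF(x)))^k$ via the same lemma applied to $P$. No substantive difference.
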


\begin{remark}
\label{Remark-Powers-of-bounded-power-series-is-bounded}

It is easy to see that if $P(T)$ is a bounded power series, then $P(T)^n$ is bounded for all $n \ge 0$. Since $\lim_{k \rightarrow \infty}\cF_k(x) = 0$, we deduce from Lemma \ref{Lemma-The-evaluation-of-a-bounded-power-series-at-an-umbra-is-admissible} that $P(\cF(x))^k$ is an admissible power series for all $x \in \cA_{\cF}$.

\end{remark}

\begin{proof}

Take any integer $k \ge 0$, and define $P_k(T) = T^k \in F_b[[T]]$. By Remark \ref{Remark-Powers-of-bounded-power-series-is-bounded}, we know that $(P_k\circ P)(T) = P(T)^k$ is a bounded power series. Applying Lemma \ref{Lemma-The-evaluation-of-a-bounded-power-series-at-a-geometric-flow}, we deduce that
\begin{align*}
\eval\left(P(\cF(x))^k\right) = \eval\left((P_k\circ P)(\cF(x))\right) = (P_k\circ P)(\cF_1(x)) = P(\cF_1(x))^k,
\end{align*}
and thus the lemma follows.

\end{proof}

Now we are ready to prove Theorem \ref{Theorem-A-theorem-about-Goss-question}

\begin{proof}[Proof of Theorem \ref{Theorem-A-theorem-about-Goss-question}]

We maintain the same notation as in Question \ref{Question-A-question-of-Goss}.

If $e^{\widehat{\cF}_1(x)\D}$ is equal to $\cD_{\widehat{\cF}}(x)$, we see that
\begin{align*}
\sum_{k \ge 0}\eval\left(H^{(-1)}(\cF(x))\right)^k \D^{(k)} = e^{\widehat{\cF}_1(x)\D} = \cD_{\widehat{\cF}}(x) = \sum_{k \ge 0}\eval\left(H^{(-1)}(\cF(x))^k\right)\D^{(k)}.
\end{align*}
Thus we deduce that
\begin{align}
\label{Equation-The-first-equation-in-the-theorem-about-Goss-question}
\eval\left(H^{(-1)}(\cF(x))\right)^k = \eval\left(H^{(-1)}(\cF(x))^k\right)
\end{align}
for all $k \ge 0$. Set
\begin{align}
\label{Equation-The-symbol-Q1-in-the-theorem-about-Goss-question}
\cQ_1(x) := H^{(-1)}(\cF(x)).
\end{align}
By Lemma \ref{Lemma-The-evaluation-of-a-bounded-power-series-at-an-umbra-is-admissible} and since $H^{(-1)}(T)$ is bounded, the power series $\cQ_1(x) \in F_b[[T]][[\cF(x)]]$ is admissible, and thus $\eval(\cQ_1(x))$ is well-defined as an element in $F$. Since $x$ belongs to $\cA_{\widehat{\cF}_1}$, we see that
\begin{align*}
\left|\eval(\cQ_1(x))\right|_F = \left|\eval(H^{(-1)}(\cF(x)))\right|_F = \left|\widehat{\cF}_1(x)\right|_F < 1.
\end{align*}

Equation $(\ref{Equation-The-first-equation-in-the-theorem-about-Goss-question})$ is equivalent to the equation
\begin{align}
\label{Equation-The-2nd-equation-in-the-theorem-about-Goss-question}
\eval\left(\cQ_1(x)^k\right) = \eval\left(\cQ_1(x)\right)^k = \widehat{\cF}_1(x)^k
\end{align}
for all $k \ge 0$.

It follows from $(\ref{Equation-The-symbol-Q1-in-the-theorem-about-Goss-question})$ that
\begin{align*}
H(\cQ_1(x))^n = H(H^{(-1)}(\cF(x)))^n = \cF(x)^n
\end{align*}
for all $n \ge 0$, which implies that $H(\cQ_1(x))^n$ is an admissible power series for all $n \ge 0$. Now take any integer $n \ge 0$, and write
\begin{align*}
H(T)^n = \sum_{k \ge 0}h_k T^k,
\end{align*}
where the $h_k$ are elements in $F$. Hence we deduce that
\begin{align*}
\cF(x)^n = H(\cQ_1(x))^n = \sum_{k \ge 0}h_k \cQ_1(x)^k,
\end{align*}
and it thus follows from $(\ref{Equation-The-2nd-equation-in-the-theorem-about-Goss-question})$ that
\begin{align*}
\cF_n(x) = \eval\left(\cF(x)^n\right) = \eval\left(\sum_{k \ge 0}h_k \cQ_1(x)^k\right) = \sum_{k \ge 0}h_k \eval\left(\cQ_1(x)^k\right) = \sum_{k \ge 0}h_k \widehat{\cF}_1(x)^k = H(\widehat{\cF}_1(x))^n.
\end{align*}
When $n$ equals $1$, we see that $\cF_1(x) = H(\widehat{\cF}_1(x))$, and thus it follows from the above equation that
\begin{align*}
\cF_n(x) = H(\widehat{\cF}_1(x))^n = \cF_1(x)^n,
\end{align*}
which proves that $\cF$ is geometric at $x$.

Now we prove the backward implication in Theorem \ref{Theorem-A-theorem-about-Goss-question}. Indeed, if $\cF$ is geometric at $x$, then applying Lemma \ref{Lemma-The-evaluation-of-a-bounded-power-series-at-a-geometric-flow} with $H^{(-1)}(T)$ in the role of $P(T)$, we deduce that
\begin{align*}
\eval\left(H^{(-1)}(\cF(x))\right)^k = \eval\left(H^{(-1)}(\cF(x))^k\right)
\end{align*}
for all $k \ge 0$. Thus $e^{\widehat{\cF}_1(x)\D}(x)$ is equal to $\cD_{\widehat{\cF}}(x)$. Therefore our contention follows.

\end{proof}

\begin{example}
\label{Example-The-naive-flow-is-the-image-of-the-additive-flow-under-the-Fourier-transform}

Throughout this example, we let $F$ be the complete field as in Subsection \ref{Subsection-The-naive-flow}. Let $\cS$ be the additive umbral map in Subsection \ref{Subsection-The-additive-flow}, and let $\cN$ be the naive umbral map in Subsection \ref{Subsection-The-naive-flow}. Since the power series $H(T) := e_C(T)$ satisfies all conditions in Definition \ref{Definition-Additive-linear-isomorphisms}, we see that the isomorphism $\phi : F[[T]] \rightarrow F[[T]]$ defined by $\phi(T) = H(T)$ is an additive isomorphism. Note that
\begin{align*}
\cA_{\cS_1} = \{x \in F \; | \; |\cS_1(x)|_F = |x|_F < 1\} = \cA_{\cS}.
\end{align*}

Since $\cS$ is geometric at any $x \in \cA_{\cS}$, we see that
\begin{align*}
\cN_n(x) = e_C(x)^n = H(\cS_1(x))^n = \eval\left(H(\cS(x))^n\right)
\end{align*}
for all $n \ge 0$ and any $x \in \cA_{\cS} \cap \cA_{\cN}$. By Theorem \ref{Theorem-A-flow-under-the-Fourier-transform}, we deduce that $\cD_{\cN}$ is dual to $\cD_{\cS}$. By Theorem \ref{Theorem-A-theorem-about-Goss-question}, we know that
\begin{align*}
\cD_{\cN}(x) = e^{\cN_1(x)\D} = e^{e_C(x)\D}
\end{align*}
for all $x \in \cA_{\cS} \cap \cA_{\cN}$. This is another way of constructing the naive flow map via the duality with the additive flow map.

\end{example}

\begin{example}
\label{Example-The-twitsed-flow-map-is-not-equal-to-the-naive-flow-map-under-any-Fourier-transform}

In this example, we let the complete field $F$ as in Subsection \ref{Subsection-The-naive-flow} and Subsection \ref{Subsection-The-twisted-flow}. We know from Subsection \ref{Subsection-The-twisted-flow} that the twisted umbral map $\cT$ is not geometric at any element $x \in A^{\times}$. Hence if we take any umbral map $\widehat{\cT} : F \rightarrow \cU$ such that the flow map $\cD_{\widehat{\cT}}(x)$ is dual to $\cD_{\cT}$, then we know from Theorem \ref{Theorem-A-theorem-about-Goss-question} that the flow $e^{\widehat{\cT}(x)\D}$ is not equal to the flow $\cD_{\widehat{\cT}}(x)$ for any $x \in \cA_{\cT} \cap \cA_{\widehat{\cT}} \cap \cA_{\widehat{\cT}_1}$.

For example, let
\begin{align*}
H(T) := \sum_{k \ge 0}(e_C(1) T)^{q^k} \in F_b[[T]].
\end{align*}
It is clear that $H(T)$ satisfies all the conditions in Definition \ref{Definition-Additive-linear-isomorphisms}. Let $\phi : F[[T]] \rightarrow F[[T]]$ be the additive isomorphism such that $\phi(T) = H(T)$. Let $\widehat{\cT} : F \rightarrow \cU$ be the umbral map such that the sequence $\{\widehat{\cT}_k(x)\}_{k \ge 0}$ umbrally represented by $\widehat{\cT}(x)$ is defined by
\begin{align*}
\widehat{\cT}_k(x) = \eval\left(H(\cT(x))^k\right)
\end{align*}
for all $k \ge 0$ and all $x \in \cA_{\cT} = F$. By Theorem \ref{Theorem-A-flow-under-the-Fourier-transform}, we know that the flow map $\cD_{\widehat{\cT}}$ is dual to the flow map $\cD_{\widehat{\cT}}$.

As was shown by Goss \cite{Goss-JAlgebra-146-1992}, we know that
\begin{align*}
\widehat{\cT}_1(x) = \eval(H(\cT(x)) = e_C(x).
\end{align*}
By Theorem \ref{Theorem-A-theorem-about-Goss-question} and since $\cT$ is not geometric at any $x \in A^{\times}$, we know that the naive flow $e^{\widehat{\cT}_1(x)\D} = e^{e_C(x)\D}$ is not equal to the flow $\cD_{\widehat{\cT}}(x)$ for any $x \in \cA_{\cT} \cap \cA_{\widehat{\cT}} \cap \cA_{\widehat{\cT}_1}$, where
\begin{align*}
\cA_{\widehat{\cT}_1} = \{x \in F \; | \; |\widehat{\cT}_1(x)|_F = |e_C(x)|_F < 1\}.
\end{align*}

\end{example}

\section{Epilogue}
\label{S-Epilogue}

We end this paper by making some comments on the umbral calculus that is introduced in this paper. We also compare our umbral calculus with other classical umbral calculi in Roman and Rota \cite{Roman-Rota}, Rota and Taylor \cite{Rota-Taylor-SIAM-25-1994}, and Ueno \cite{Ueno-1988}.

There are many ways of representing a sequence of numbers. The classical umbral calculus was born out of the realization that one can describe a sequence as a definite integral. For example, let $\{\alpha_k\}_{k \ge 0}$ be a sequence of real numbers. Choosing an appropriate function $F$, one can represent $\{\alpha_k\}_{k \ge 0}$ as
\begin{align*}
\alpha_k = \int_0^1\alpha^kF(\alpha)d\alpha.
\end{align*}
In terms of linear operators, the sequence $\{\alpha_k\}_{k \ge 0}$ is obtained by applying the linear functional $\cL$ to the sequence of polynomials $\{\alpha^k\}_{k \ge 0}$, where $\cL : \bR[\alpha] \rightarrow \bR$ is defined by
\begin{align*}
\cL(\sum_{i = 0}^nu_i\alpha^i) =\sum_{i = 0}^n u_i\int_0^1\alpha^iF(\alpha)d\alpha.
\end{align*}
In this way, studying the linear functional $\cL$ may offer an insight into the properties of $\{\alpha_k\}_{k \ge 0}$. 

For a collection of sequences $\left(\{\alpha_k^{(i)}\}_{k \ge 0}\right)_{i \in I}$, one can also use the above method to associate the collection to a linear functional. Indeed, for each $i \in I$, let $\cL_i : \bR[\gamma_i] \rightarrow \bR$ be the linear functional associated to the sequence $\{\alpha_k^{(i)}\}_{k \ge 0}$. Let $\cU$ be the collection of variables $\{\gamma_i\}_{i \in I}$. We define the linear functional $\cL : \bR[\cU] \rightarrow \bR$ such that
\begin{align*}
\cL(\gamma_{i_1}^{n_1}\cdots \gamma_{i_l}^{n_l}) = \cL_{i_1}(\gamma_{i_1}^{n_1})\cdots \cL_{i_l}(\gamma_{i_l}^{n_l})
\end{align*}
for any $\gamma_{i_1}, \ldots, \gamma_{i_l} \in \cU$. In this way, each sequence $\{\alpha_k^{(i)}\}_{k \ge 0}$ is obtained by applying the linear functional $\cL$ to the sequence of polynomials $\{\gamma_i^k\}_{k \ge 0}$. Thus obtaining information about the linear functional $\cL$ can give an insight into the properties of each sequence $\{\alpha_k^{(i)}\}_{k \ge 0}$ in the collection. One of course can replace $\bR$ by any integral commutative domain, and hence obtaining the umbral calculus studied in Roman and Rota \cite{Roman-Rota}, and Rota and Taylor \cite{Rota-Taylor-1993}.

The classical umbral calculus in \cite{Roman-Rota} and \cite{Rota-Taylor-1993} is very useful for studying special sequences of numbers such as the Bernoulli numbers (see Rota and Taylor \cite[Section {\bf4}]{Rota-Taylor-1993}).

In more algebraic terms, we can summarize the umbral calculus in \cite{Roman-Rota} and \cite{Rota-Taylor-1993} as follows. For any linear functionals $\cL_1, \cL_2$ on polynomials, one can define the multiplication ``$\star$'' between $\cL_1, \cL_2$ by
\begin{align*}
(\cL_1 \star\cL_2)(\alpha^n) = \sum_{i = 0}^n\binom{n}{i}\cL_1(\alpha^i)\cL_2(\alpha^{n - i}).
\end{align*}
The linear functionals on polynomials equipped with the usual addition and the multiplication ``$\star$'' form an algebra which Roman and Rota \cite{Roman-Rota} called the \textit{umbral algebra}. It is well-known \cite{Roman-Rota} that this algebra is topologically isomorphic to the algebra of formal power series. Hence from a more algebraic point of view, the classical umbral calculus is equivalent to studying the pair of topological $\fR$-linear spaces $(\fR[\alpha], \fR[[\tau]])$ in which the duality between these two spaces plays a central role, where $\fR$ is an integral domain of characteristic zero.

Ueno \cite{Ueno-1988} called the classical umbral calculus in \cite{Roman-Rota} and \cite{Rota-Taylor-1993} the \textit{polynomial umbral calculus}, partly because of its central role in studying classical polynomial sequences such as Hermite and Laguerre polynomials (see Roman and Rota \cite[Section {\bf13}]{Roman-Rota}). Other types of umbral calculi appear as the result of studying other pairs of topological linear spaces with duality. Ueno \cite{Ueno-1988} developed the \textit{general power umbral calculus} by studying either the pair of topological $F$-linear spaces $(\alpha^uF((\alpha)), \tau^u F((\tau^{-1})))$ or $(\lambda^uF((\lambda^{-1})), \gamma^u F((\gamma)))$ with duality. Here $u$ is an element in a field $F$ of characteristic zero. The general power umbral calculus plays a crucial role in studying classical special functions such as hypergeometric functions, and Bessel functions (see Ueno \cite[Part {\bf I}, Section {\bf7}]{Ueno-1988}).

Note that each umbral calculus is designed for a \textit{special} object of study. For example, the polynomial umbral calculus in \cite{Roman-Rota} and \cite{Rota-Taylor-1993} is suitable for studying special classical polynomial sequences, whereas the general power umbral calculus developed by Ueno \cite{Ueno-1988} is used to study classical special functions. In this paper, we are mainly devoted to studying flows in finite characteristic with applications to additive harmonic analysis in mind. Hence our umbral calculus is especially designed for studying flows in finite characteristic.

For a field $F$ complete under a non-Archimedean norm, a flow is an operator acting on the algebra  $F_b[[T]]$ of bounded power series. This is one of the reasons why we take $F_b[[T]]$ to be the range of the evaluation map $\eval$ in our umbral calculus. We also incorporate the non-Archimedean topology of $F$ into our umbral calculus, and slightly modify the notion of admissible power series that seems more natural and suitable for applications in finite characteristic. This allows us to extend the domain of the evaluation map $\eval$ to a sufficiently large domain in which our umbral calculus can be applied to studying flows in finite characteristic. 

We emphasize that there should be other umbral calculi in finite characteristic waiting to be discovered. For example, there is a function field analogue of Bernoulli numbers that is called Bernoulli--Carlitz numbers. Using Rota and Taylor \cite{Rota-Taylor-1993}, Carlitz \cite{Carlitz-Duke-Math-Journal-1935-no.2} \cite{Carlitz-Duke-Math-Journal-1937} \cite{Carlitz-Duke-Math-Journal-1940-von-Staudt}, and Goss \cite{Goss-Duke-1978}, can one find an umbral calculus to study Bernoulli--Carlitz numbers?

\section*{Acknowledgements}

I am very grateful to David Goss for his insightful comments and suggesting me to look at his question in Goss \cite{Goss-JAlgebra-146-1992}, which motivated me to write up Section \ref{Section-Examples-and-a-question-of-Goss} of this paper as a first step toward completely understanding his question. I also thank David Goss for many wonderful discussions and his encouragement. I would like to thank the referee for his useful comments, and many great questions that he posed to me. I really hope to return to these questions in a near future.

\end{document}